\documentclass[11pt]{amsart}

\usepackage{amsmath,amssymb,amsfonts,amsthm,mathtools}
\usepackage{array,booktabs,longtable,tabularx}
\usepackage{xcolor}
\usepackage{tikz}
\usetikzlibrary{matrix,decorations.pathreplacing,calc,positioning,backgrounds}
\usepackage[top=1in,bottom=1in,left=1in,right=1in]{geometry}
\usepackage[colorlinks=true,linkcolor=blue,citecolor=blue,urlcolor=blue]{hyperref}

\theoremstyle{plain}
\newtheorem{theorem}{Theorem}[section]

\newtheorem{corollary}[theorem]{Corollary}

\newtheorem{proposition}[theorem]{Proposition}

\theoremstyle{definition}
\newtheorem{definition}[theorem]{Definition}
\newtheorem{exam}[theorem]{Example}

\title{Characteristic Independence of Betti Numbers of Monomial Ideals in Five Variables}
\author{Noah Ripke}
\address{Department of Mathematics, Yale University, 219 Prospect Street, New Haven, CT 06511, USA}
\email{noah.ripke@yale.edu}

\author{Phillip Yoon}
\address{Department of Mathematics, Hunter College, City University of New York, 695 Park Ave., New York, NY 10065, USA}
\email{phillip.yoon58@login.cuny.edu}

\begin{document}

\begin{abstract}
Alesandroni proved that Betti numbers of monomial ideals in at most four variables are independent of the characteristic of the base field, while characteristic-dependent Betti numbers occur in six variables. We prove that the five-variable case is characteristic-independent. More precisely, if $S=k[x_1,\ldots,x_5]$ and $M\subseteq S$ is a monomial ideal, then the multigraded, graded, and total Betti numbers of $S/M$ are independent of $\operatorname{char}(k)$. The proof reduces arbitrary monomial ideals to squarefree twin ideals and then applies Hochster's formula. The topological input is that simplicial complexes on at most five vertices have torsion-free integral homology. The six-variable example arising from the six-vertex triangulation of $\mathbb{RP}^{2}$ shows that the bound is sharp. We also record a computation of the graded Betti tables of squarefree monomial ideals in five variables up to relabeling.
\end{abstract}

\subjclass[2020]{Primary 13D02; Secondary 13F55, 05E45.}
\keywords{Monomial ideals, Betti numbers, characteristic dependence, Stanley--Reisner ideals, Hochster's formula.}

\maketitle

\section{Introduction}

We investigate whether Betti numbers of monomial ideals in five variables can depend on the characteristic of the base field. Characteristic independence is known in certain fixed homological degrees without a restriction on the number of variables. In particular, Terai and Hibi used Alexander duality to prove that the total second Betti number of every Stanley--Reisner ring is independent of the choice of base field \cite{teraihibi1996alexander}. Alesandroni \cite{alesandroni2023betti} proved characteristic independence for monomial ideals in at most four variables. The six-variable example recorded by Peeva \cite{peeva2011graded} shows that characteristic dependence occurs in six variables. This leaves the five-variable case open. We prove that Betti numbers of monomial ideals in five variables are indeed characteristic independent:

\begin{theorem}\label{allfivevariabletheorem}
Let $S=k[x_1,\ldots,x_5]$, and let $M\subseteq S$ be a monomial ideal. Then the multigraded, graded, and total Betti numbers of $S/M$ are independent of the characteristic of $k$.
\end{theorem}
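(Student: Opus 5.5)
The plan is to reduce to a squarefree problem on the same five variables and then read off Betti numbers from homology of complexes with at most five vertices. Polarization is not usable here, since it increases the number of variables. Instead we use the lcm-lattice philosophy (Gasharov--Peeva--Welker): multigraded Betti numbers of $S/M$ depend only on the isomorphism type of the lcm lattice $L_M$ together with the degree labels.

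Concretely, for a multidegree $m$ we have
\[
\beta_{i,m}(S/M)=\dim_k \widetilde H_{i-2}\bigl((\hat 0,m)_{L_M};k\bigr)
\]
for $m\in L_M$, and $\beta_{i,m}=0$ otherwise. A more convenient route is the upper Koszul simplicial complex
\[
K^m(M)=\{F\subseteq\{1,\dots,5\} : m/x^F\in M\},
\]
which gives $\beta_{i,m}(S/M)=\dim_k \widetilde H_{i-2}(K^m(M);k)$. This is the ``squarefree twin'' reduction. For each fixed multidegree $m$, the complex $K^m(M)$ is the Stanley--Reisner-type complex of the squarefree monomial ideal
\[
M_m=\bigl(x^F : m/x^F\notin M\ \text{or}\ F\not\subseteq \operatorname{supp}(m)\bigr)
\]
in the same five variables, and its homology computes the corresponding squarefree Betti number via Hochster's formula. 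So I would first establish, or cite, the formula $\beta_{i,m}(S/M)=\beta_{i,\mathbf{1}_{\operatorname{supp}}}(S/M^{\mathrm{sq}}_m)$ with a squarefree ideal on at most five variables. The essential point is simply that $K^m(M)$ is a simplicial complex on the vertex set $\{1,\dots,5\}$.

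Next comes the topological input. Every simplicial complex $\Delta$ on at most five vertices has torsion-free integral homology. Granting this, the universal coefficient theorem gives
\[
\widetilde H_j(\Delta;k)\cong \widetilde H_j(\Delta;\mathbb Z)\otimes k,
\]
because the Tor term vanishes. Hence $\dim_k$ equals the integral rank, which is independent of $k$. Applying this to $K^m(M)$, or to the full subcomplexes appearing in Hochster's formula $\beta_{i,\sigma}=\dim \widetilde H_{|\sigma|-i-1}(\Delta|_\sigma;k)$, shows that every multigraded Betti number is characteristic independent. Graded and total Betti numbers are finite sums of multigraded ones, so they are independent as well.

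The main obstacle is proving torsion-freeness for complexes on at most five vertices. I would argue as follows. If $\Delta$ is the full simplex or a cone, it is acyclic. Otherwise, torsion can only appear in $H_j$ with $1\le j\le 2$, since $H_0$ is free, the top possible dimension is $3$, and $H_{\text{top}}$ is always free. For $H_1$, the complex has at most $5$ vertices. If some vertex $v$ is not a cone point, use the Mayer--Vietoris sequence for $\Delta=\operatorname{st}(v)\cup \Delta\setminus v$, whose intersection is $\operatorname{lk}(v)$, a complex on at most $4$ vertices. By induction (Alesandroni's four-vertex case, or a direct check) $\operatorname{lk}(v)$ and $\Delta\setminus v$ have free homology. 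Torsion in $H_j(\Delta)$ would then have to come from the cokernel of $H_{j}(\operatorname{lk} v)\to H_j(\Delta\setminus v)$. I would control this either by noting that a complex on $\le 4$ vertices has homology generated by boundaries of missing faces, which map to primitive classes, or, more safely, by a finite computer enumeration of all complexes on five vertices up to isomorphism (there are few thousand), computing Smith normal forms. I expect to fall back on the enumeration, with the $\mathbb{RP}^2$ six-vertex example showing why the argument must break at six.
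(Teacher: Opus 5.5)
Your proof is correct in outline but takes a different and more direct route than the paper. The paper fixes a Taylor multidegree $m$ and passes in three steps to the squarefree twin ideal $M_m''$: first to $M_m$ by Gasharov--Hibi--Peeva, then to the twin ideal by Alesandroni's theorem, then to $M_m''$ by the flat extension $y_j\mapsto x_j^{\alpha_j}$. Only then does it apply Hochster's formula to induced subcomplexes. You bypass all of this with the upper Koszul formula $\beta_{i,m}(S/M)=\dim_k\widetilde H_{i-2}(K^m(M);k)$, which holds for $i\ge 1$ and every $m\in\mathbb N^5$ (and $\beta_{0,m}$ is trivially $1$ or $0$). Since $K^m(M)$ is a complex on $\operatorname{supp}(m)\subseteq[5]$ defined without reference to $k$, torsion-freeness and the Universal Coefficient Theorem finish at once. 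Your route makes it transparent that the relevant vertex count is the number of variables. The paper's route instead identifies each Betti number with one of an actual squarefree ideal in at most five variables, which ties the theorem to Alesandroni's framework and to the squarefree enumeration.

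Two statements in your write-up should be corrected, though neither is load-bearing.

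First, $\beta_{i,m}(S/M)$ is not obtained from Hochster's formula for the Stanley--Reisner ideal of $K^m(M)$ itself. For your $M_m$, Hochster gives $\dim_k\widetilde H_{|W|-i-1}(K^m(M);k)$ with $W=\operatorname{supp}(m)$, not $\widetilde H_{i-2}$. For example, take $M=(x_1,x_2)$ and $m=x_1x_2$: then $K^m(M)$ is two points and $\beta_{2,m}(S/M)=1$, while your formula gives $0$. The correct squarefree ideal is the Stanley--Reisner ideal of the Alexander dual of $K^m(M)$ inside $\operatorname{supp}(m)$, which is exactly the paper's $M_m''$. Since you apply torsion-freeness to $K^m(M)$ directly, simply drop this claim.

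Second, your Mayer--Vietoris step correctly reduces torsion in $\widetilde H_j(\Delta)$ to torsion in $\operatorname{coker}\bigl(\widetilde H_j(\operatorname{lk}v)\to\widetilde H_j(\Delta\setminus v)\bigr)$, but your proposed control of that cokernel fails. Homology of a complex on four vertices need not be generated by boundaries of missing faces; the $4$-cycle is a counterexample. So as written you depend on the computer enumeration, which is legitimate, or on a citation, as the paper uses Govc--Marzantowicz--Michalak--Pave{\v{s}}i{\'c}.

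A short alternative is combinatorial Alexander duality over $\mathbb Z$. Set $\Delta^\vee=\{[5]\setminus F: F\notin\Delta\}$. For $\Delta\subsetneq 2^{[5]}$ one has $\widetilde H_j(\Delta;\mathbb Z)\cong\widetilde H^{2-j}(\Delta^\vee;\mathbb Z)$. The torsion of this group is that of $\widetilde H_{1-j}(\Delta^\vee;\mathbb Z)$, which vanishes for $j\ge 1$ because $\widetilde H_0$ and $\widetilde H_{-1}$ are free; $\widetilde H_0(\Delta)$ is free anyway. With six vertices the same computation pairs torsion in $\widetilde H_1$ with torsion in $\widetilde H_1$, so nothing is forced, which is where $\mathbb{RP}^2$ enters.
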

Thus six is the smallest number of variables in which characteristic dependence can occur.

The proof reduces the problem to the squarefree case. For each Taylor multidegree $m$, Alesandroni's squarefree twin ideal construction identifies $\beta^S_{i,m}(S/M)$ with a Betti number of a squarefree monomial ideal in at most five variables. Such an ideal is the Stanley--Reisner ideal of a simplicial complex on at most five vertices. By Hochster's formula, its Betti numbers are determined by the reduced homology of induced subcomplexes. Since every connected simplicial complex on at most five vertices is homotopy equivalent to a wedge of spheres \cite{govc2025fundamental}, these induced subcomplexes have torsion-free integral homology. The Universal Coefficient Theorem then implies that the relevant homology dimensions are independent of the characteristic.

Thus characteristic-dependence cannot occur for monomial ideals in at most five variables. The six-variable example coming from the six-vertex triangulation of $\mathbb{RP}^{2}$ shows that this bound is sharp: the triangulation has $\mathbb Z/2$-torsion in integral homology \cite{reisner1976cohen}.

\section{Reduction to Squarefree Ideals}\label{reductiontosquareidealssection}

The first step is to reduce arbitrary monomial ideals to squarefree monomial ideals. Let $S=k[x_1,\ldots,x_n]$, and let $M\subseteq S$ be a monomial ideal with minimal generating set $G(M)$. Let $\mathbb T_M$ denote the Taylor resolution of $S/M$. We call a monomial $m$ a Taylor multidegree of $M$ if
\[
m=\operatorname{lcm}(g_{j_1},\ldots,g_{j_t})
\]
for some subset $\{g_{j_1},\ldots,g_{j_t}\}\subseteq G(M)$, with the convention that the least common multiple of the empty subset is 1.

For a monomial $m$, define
\[
M_m=(g\in G(M): g\mid m).
\]
Equivalently, $M_m$ is the ideal generated by the minimal generators of $M$ that divide $m$. The following theorem of Gasharov--Hibi--Peeva allows us to ignore all generators of $M$ that do not divide the multidegree under consideration.

\begin{theorem}[Gasharov--Hibi--Peeva {\cite[Theorem~2.1]{GasharovHibiPeeva2002}}]\label{GHPmultidegreereduction}
Let $M$ be minimally generated by $G(M)$, and let $m$ be a monomial. Let
\[
M_m=(g\in G(M): g\mid m).
\]
Then
\[
\beta_{i,m}^S(S/M)=\beta_{i,m}^S(S/M_m)
\]
for every $i$.
\end{theorem}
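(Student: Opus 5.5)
The plan is to compute $\beta_{i,m}$ as the homology of an upper Koszul simplicial complex in degree $m$ and observe that this complex depends only on the generators dividing $m$. For a monomial ideal $N$ and a monomial $m$, set
\[
K^m(N)=\{F\subseteq\{1,\ldots,n\}: m/x_F\in N\},
\]
where $x_F=\prod_{j\in F}x_j$ and we require $x_F\mid m$. The standard formula, obtained by computing $\operatorname{Tor}^S_i(S/N,k)$ with the Koszul complex on $x_1,\ldots,x_n$ tensored with $S/N$ and restricting to multidegree $m$, is
\[
\beta^S_{i,m}(S/N)=\dim_k\widetilde H_{i-2}(K^m(N);k)\quad (i\ge 1),
\]
and $\beta_{0,m}(S/N)=1$ exactly when $m=1$. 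The proof then has three steps.

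\textbf{Step 1: recall the Koszul formula.} I will take this as the starting point. The degree-$m$ strand of $K_\bullet(x)\otimes S/N$ has basis the sets $F$ with $x_F\mid m$ and $m/x_F\notin N$. This is the relative chain complex of the full simplex on the support of $m$, modulo $K^m(N)$. Its homology is $\widetilde H_{i-1}$ of the simplex relative to $K^m(N)$, which is isomorphic to $\widetilde H_{i-2}(K^m(N))$ because the simplex is contractible.

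\textbf{Step 2: show $K^m(M)=K^m(M_m)$.} This is the key combinatorial point. Fix $F$ with $x_F\mid m$. Suppose $m/x_F\in M$. Then some $g\in G(M)$ divides $m/x_F$, and hence $g$ divides $m$. So $g\in G(M_m)$, which gives $m/x_F\in M_m$. The converse holds because $M_m\subseteq M$. Thus the two complexes coincide.

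\textbf{Step 3: handle the degenerate cases.}
\begin{itemize}
\item $i=0$: both sides equal $1$ if $m=1$ and $0$ otherwise.
\item $m\notin M$: then $K^m(M)=K^m(M_m)=\emptyset$.
\item $m$ a generator-free monomial: here $M_m=0$, and the empty-complex convention gives the same value on both sides.
\end{itemize}
The identity holds in every case.

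I expect the main obstacle to be Step 1, specifically getting the indexing and the empty-complex conventions right. The comparison in Step 2 is essentially immediate. If one prefers to avoid Koszul complexes, the alternative is to restrict the Taylor resolution. The subcomplex of $\mathbb T_M$ spanned by subsets of generators whose lcm divides $m$ equals $\mathbb T_{M_m}$, and the degree-$m$ strands of $\mathbb T_M\otimes k$ and $\mathbb T_{M_m}\otimes k$ agree. This argument works because the degree-$m$ part of a free module $S(-\operatorname{lcm}\sigma)\otimes k$ is nonzero only when $\operatorname{lcm}\sigma=m$.
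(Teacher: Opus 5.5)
The paper does not prove this statement; it quotes it from Gasharov--Hibi--Peeva. So the relevant comparison is with their argument. Your proof is correct but takes a different route.

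Gasharov--Hibi--Peeva work on the resolution side. For a multigraded free resolution $\mathbf F$ of $S/M$, the subcomplex $\mathbf F_{\le m}$ spanned by basis elements whose multidegrees divide $m$ is a free resolution of $S/M_m$, minimal when $\mathbf F$ is. Exactness is checked one multidegree at a time, since each strand of $\mathbf F_{\le m}$ is a copy of a strand of $\mathbf F$ in a multidegree dividing $m$.

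You work on the coefficient side, resolving $k$ by the Koszul complex, and the whole content becomes your Step 2: a monomial dividing $m$ lies in $M$ if and only if it lies in $M_m$. In fact Step 2 already finishes the proof at the chain level, so Steps 1 and 3 can be dropped. The degree-$m$ strand of $K_\bullet(x)\otimes_S S/N$ has basis $\{e_F : x_F\mid m,\ m/x_F\notin N\}$. Its differential sends $e_F$ to $\sum_{j\in F}\pm e_{F\setminus j}$, omitting exactly the terms with $m/x_{F\setminus j}\in N$. Hence for $N=M$ and $N=M_m$ these strands are the same complex, and $\operatorname{Tor}^S_i(-,k)_m$ agree for every $i$, including $i=0$.

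This matters because the bookkeeping you flagged really is slippery:
\begin{itemize}
\item the formula $\beta^S_{i,m}(S/N)=\dim_k\widetilde H_{i-2}(K^m(N);k)$ fails for $m=1$, $N=S$, $i=1$;
\item the contractibility step fails when $m=1$;
\item your $i=0$ case needs $M\neq S$;
\item your second and third degenerate cases coincide, since $m\notin M$ exactly when $M_m=0$.
\end{itemize}
None of this damages the conclusion, because $M=S$ if and only if $M_m=S$.

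Your Taylor alternative is closest in spirit to Gasharov--Hibi--Peeva, applied to a non-minimal resolution. It is also correct: $(\mathbb T_M\otimes_S k)_m$ and $(\mathbb T_{M_m}\otimes_S k)_m$ have the same basis $\{e_\sigma : \sigma\subseteq G(M),\ \operatorname{lcm}\sigma=m\}$ (every such $\sigma$ lies in $G(M_m)$) and the same differential.

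The Koszul route is shorter and needs no resolution of $S/M$ at all. The Gasharov--Hibi--Peeva route gives a stronger structural fact: minimal resolutions restrict to minimal resolutions.
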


Now write
\[
m=x_1^{\alpha_1}\cdots x_n^{\alpha_n}.
\]
For a generator
\[
g=x_1^{b_1}\cdots x_n^{b_n}
\]
of $M_m$, define
\[
g'=\prod_{\substack{1\leq j\leq n\\ b_j=\alpha_j}} x_j^{\alpha_j}.
\]
The ideal generated by the monomials $g'$, as $g$ ranges over the generators of $M_m$, is called the twin ideal of $M_m$ and is denoted $M_m'$.

To make this ideal squarefree, set
\[
T_m=k[y_j:\alpha_j>0]
\]
and define
\[
g''=\prod_{\substack{1\leq j\leq n\\ b_j=\alpha_j\\ \alpha_j>0}}y_j.
\]
The ideal generated by these monomials $g''$ is called the squarefree twin ideal of $M_m$ and is denoted $M_m''\subseteq T_m$. Finally, set
\[
y_m=\prod_{\alpha_j>0}y_j.
\]

\begin{theorem}[Squarefree twin reduction]\label{squarefreetwinidealreductionthm}
Let $M\subseteq S=k[x_1,\ldots,x_n]$ be a monomial ideal, and let $m$ be a Taylor multidegree of $M$. With the notation above,
\[
\beta_{i,m}^S(S/M)=\beta_{i,y_m}^{T_m}(T_m/M_m'')
\]
for every $i$.
\end{theorem}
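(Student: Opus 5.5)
The plan is to express both sides of Theorem~\ref{squarefreetwinidealreductionthm} as the reduced homology of one and the same simplicial complex, built from the lcm data of the generators. By Theorem~\ref{GHPmultidegreereduction} we may replace $M$ by $M_m$ on the left. Since $m$ is a Taylor multidegree of $M$, we have $m=\operatorname{lcm}(g_{j_1},\ldots,g_{j_t})$ for certain generators, all of which divide $m$. Hence $m=\operatorname{lcm}(G(M_m))$.

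The tool is the standard multidegree-wise description of the Taylor resolution. Let $N$ be an ideal with a generating set $G=\{h_1,\ldots,h_r\}$, which need not be minimal. Let $\mu=\operatorname{lcm}(G)$, and assume $\mu$ is not divisible by the lcm of any proper subset. Let $X_{<\mu}$ be the simplicial complex of subsets $\sigma\subseteq G$ with $\operatorname{lcm}(\sigma)\neq\mu$. Here every subset $\sigma\subseteq G$ has $\operatorname{lcm}(\sigma)\mid\mu$, since $\mu$ is the lcm of everything. Then
\[
\beta_{i,\mu}(R/N)=\dim_k \widetilde H_{i-2}(X_{<\mu};k).
\]
To justify this, take the multidegree-$\mu$ strand of $\mathbb T_N\otimes k$. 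It is the relative chain complex of the full simplex on $G$ modulo $X_{<\mu}$, shifted by one to account for the empty face in homological degree $0$. Taylor's resolution is a free resolution for any generating set, so this strand computes $\operatorname{Tor}_i(R/N,k)_\mu$. Acyclicity of the simplex then gives the stated reduced homology. Hence
\[
\beta_{i,m}^S(S/M_m)=\dim_k\widetilde H_{i-2}(X^{M_m}_{<m};k).
\]

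Next, apply the same formula over $T_m$ to the generating family $\{g'':g\in G(M_m)\}$ of $M_m''$, indexed by the same set $G(M_m)$. We allow repetitions, and the family may fail to be minimal. For a subset $\sigma\subseteq G(M_m)$, the key observation is the following. The lcm of $\sigma$ equals $m$ if and only if, for every $j$ with $\alpha_j>0$, some $g\in\sigma$ has $b_j=\alpha_j$. This is exactly the condition that $\operatorname{lcm}\{g'':g\in\sigma\}=y_m$. Taking $\sigma=G(M_m)$ shows that $y_m$ is the lcm of the whole family. Moreover, the complexes $X^{M_m}_{<m}$ and $X^{M_m''}_{<y_m}$ coincide as simplicial complexes on the vertex set $G(M_m)$. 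Therefore the two Betti numbers are equal.

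The main obstacle is making sure the Taylor formula is legitimately applied to a possibly non-minimal family with repeated entries. The family may even contain $g''=1$, in which case $M_m''=T_m$. I would handle this by noting that Taylor's construction and its exactness never use minimality. If $1$ occurs in the family, then adding that vertex does not change the lcm of any face. Consequently $X_{<y_m}$ is a cone, and both sides vanish, which is consistent with $T_m/T_m=0$. Beyond that, the remaining bookkeeping is routine: the homological shift, and the convention that $\widetilde H_{-1}(\emptyset)=k$ covers $i=1$ with $m$ a single generator.
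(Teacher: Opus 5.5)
Your proof is correct, but it takes a different route from the paper. The paper reduces to $M_m$ by Gasharov--Hibi--Peeva and then cites Alesandroni's twin-ideal theorem as a black box to pass from $M_m$ to $M_m'$. It then discards the variables outside the support of $m$ and reaches $M_m''$ by a flat base change along $y_j\mapsto x_j^{\alpha_j}$.

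You instead work directly with the multidegree strand of the Taylor complex tensored with $k$. In multidegree $\operatorname{lcm}(G)$ this strand depends only on which subsets $\sigma\subseteq G$ satisfy $\operatorname{lcm}(\sigma)=\operatorname{lcm}(G)$. You then check that this predicate is the same for $G(M_m)$ at $m$ and for the indexed family $\{g''\}$ at $y_m$. In effect you reprove the twin-ideal theorem and the base-change step in one stroke; this is the top-element case of the lcm-lattice principle.

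What your route buys is self-containment: it uses only Taylor's resolution, and it shows that the two strands are literally the same complex, defined over $\mathbb Z$ independently of $k$. That fits the paper's characteristic-independence theme nicely. The paper's route is shorter given its citations, and its base-change step relates whole resolutions rather than a single multidegree.

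Two small repairs are needed.

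\begin{itemize}
\item The hypothesis ``assume $\mu$ is not divisible by the lcm of any proper subset'' is garbled, since every such lcm divides $\mu$. If you meant ``not equal to,'' it fails for $M_m$ in general; for example, $M=(x^2,xy,y^2)$ with $m=x^2y^2$. Your justification never uses this hypothesis, so delete it.
\item The step ``acyclicity of the simplex'' needs $G\neq\varnothing$. This fails for the Taylor multidegree $m=1$ coming from the empty subset when $M$ is proper. There $X_{<1}$ is the void complex, and your formula gives $\beta_{0,1}=0$ instead of $1$.
\end{itemize}

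Both issues disappear if you conclude directly from the fact that the two strands are identical. Equivalently, the two relative complexes (full simplex on $G(M_m)$ modulo $X_{<}$) coincide, so there is no need to pass through $\widetilde H_{i-2}(X)$.
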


\begin{proof}
By Theorem~\ref{GHPmultidegreereduction},
\[
\beta_{i,m}^S(S/M)=\beta_{i,m}^S(S/M_m).
\]
Since $m$ is a Taylor multidegree of $M$, the monomial $m$ is the least common multiple of the minimal generators of $M_m$. Hence Alesandroni's twin ideal theorem \cite{alesandroni2023betti} applies to $M_m$ in multidegree $m$ and gives
\[
\beta_{i,m}^S(S/M_m)=\beta_{i,m}^S(S/M_m')
\]
for every $i$.

Let
\[
S_m=k[x_j:\alpha_j>0].
\]
The variables $x_j$ with $\alpha_j=0$ do not occur in $M_m'$, so passing between $S_m$ and $S$ by polynomial extension does not change the multigraded Betti number
in multidegree $m$. Thus
\[
\beta_{i,m}^S(S/M_m')=\beta_{i,m}^{S_m}(S_m/M_m').
\]

Now consider the homomorphism
\[
\varphi:T_m=k[y_j:\alpha_j>0]\longrightarrow S_m,
\qquad
y_j\longmapsto x_j^{\alpha_j}.
\]
The ring $S_m$ is a free, hence flat, $T_m$-module via $\varphi$; explicitly, a
$T_m$-basis is given by the monomials
\[
\prod_{\alpha_j>0}x_j^{c_j}
\qquad
0\leq c_j<\alpha_j.
\]
Moreover, extension of scalars sends $M_m''$ to $M_m'$. Therefore a minimal multigraded free resolution of $T_m/M_m''$, after extension of scalars along $\varphi$, gives a minimal multigraded free resolution of $S_m/M_m'$, with the squarefree multidegree $y_m$ corresponding to the multidegree $m$. Hence
\[
\beta_{i,m}^{S_m}(S_m/M_m')
=
\beta_{i,y_m}^{T_m}(T_m/M_m'').
\]
Combining the displayed equalities gives
\[
\beta_{i,m}^S(S/M)=\beta_{i,y_m}^{T_m}(T_m/M_m'')
\]
for every $i$.
\end{proof}

\section{Stanley--Reisner Ideals}

Let $S=k[x_1,\ldots,x_n]$. For $F\subseteq[n]$, write
\[
x_F=\prod_{i\in F}x_i,
\]
with $x_\varnothing=1$. Under the correspondence $F\leftrightarrow x_F$, divisibility of squarefree monomials corresponds to inclusion of subsets.

\begin{definition}
An abstract simplicial complex $\Delta$ on vertex set contained in $[n]$ is a collection of subsets of $[n]$ closed under taking subsets. A subset $F\subseteq[n]$ is a nonface if $F\notin\Delta$, and it is a minimal nonface if every proper subset of $F$ lies in $\Delta$.
\end{definition}

\begin{definition}
The Stanley--Reisner ideal of $\Delta$ is
\[
I_\Delta=(x_F : F\subseteq[n],\ F\notin\Delta).
\]
Conversely, if $I\subseteq S$ is a squarefree monomial ideal, its Stanley--Reisner complex is
\[
\Delta_I=\{F\subseteq[n]:x_F\notin I\}.
\]
\end{definition}

Therefore, squarefree monomial ideals are precisely Stanley--Reisner ideals, and the minimal monomial generators of $I_\Delta$ correspond to the minimal nonfaces of $\Delta$.

\begin{definition}
For $W\subseteq[n]$, the induced subcomplex of $\Delta$ on $W$ is
\[
\Delta_W=\{F\in\Delta:F\subseteq W\}.
\]
\end{definition}

\begin{exam}
Let
\[
I=(x_1x_3,\ x_2x_4)\subseteq k[x_1,x_2,x_3,x_4].
\]
The corresponding Stanley--Reisner complex is
\[
\Delta_I=\{F\subseteq [4]:x_F\notin I\}.
\]
In other words, a subset $F\subseteq[4]$ is a face of $\Delta_I$ exactly when it does not contain $\{1,3\}$ and does not contain $\{2,4\}$. Recall that $x_F$ is the squarefree monomial corresponding to the subset $F$.

The sets $\{1,3\}$ and $\{2,4\}$ are nonfaces because
\[
x_1x_3\in I
\quad\text{and}\quad
x_2x_4\in I.
\]
They are minimal nonfaces because all of their proper subsets are faces. For example, the proper subsets of $\{1,3\}$ are $\varnothing, \{1\}, \{3\}$, and none of the corresponding monomials $ 1, x_1,x_3$ lies in $I$. Similarly, the proper subsets of $\{2,4\}$ are faces.

On the other hand, $\{1,2,3\}$ is also a nonface, since it contains $\{1,3\}$ and
\[
x_1x_2x_3\in I.
\]
But $\{1,2,3\}$ is not a minimal nonface because it has the proper subset $\{1,3\}$ which is already a nonface.

Thus the minimal nonfaces of $\Delta_I$ are exactly
\[
\{1,3\},\quad \{2,4\},
\]
and these correspond to the minimal generators
\[
x_1x_3,\quad x_2x_4
\]
of $I$.
\end{exam}

\section{Hochster's Formula and the Torsion Criterion}\label{Hochstersformulasection}

We use reduced simplicial homology with integer coefficients. For a finite simplicial complex $\Delta$, we write $\widetilde H_q(\Delta;\mathbb Z)$ for its $q$th reduced homology group. Recall that for $q\geq 1$, reduced homology agrees with ordinary homology, while
\[
\widetilde H_0(\Delta;\mathbb Z)\cong \mathbb Z^{c-1}
\]
when $\Delta$ has $c$ connected components.

We use the standard convention
\[
\widetilde H_{-1}(\Gamma;\mathbb Z)\cong
\begin{cases}
\mathbb Z, & \Gamma=\{\varnothing\},\\
0, & \text{otherwise},
\end{cases}
\quad
\widetilde H_q(\Gamma;\mathbb Z)=0\quad(q<-1).
\]
The same convention is used with coefficients in a field $k$. This convention is needed in Hochster's formula when an induced subcomplex has no nonempty faces.

\begin{definition}[Torsion]
An abelian group $A$ has torsion if there exists a nonzero element $a\in A$ and a positive integer $r$ such that $ra=0$. We say that a simplicial complex has torsion in reduced integral homology if some $\widetilde H_q(\Delta;\mathbb Z)$ has torsion.
\end{definition}

The following standard form of the Universal Coefficient Theorem can be found in
\cite[Section~3.A]{hatcher}:

\begin{theorem}[Universal Coefficient Theorem]\label{UniversalCoefficientTheorem}
Let $\Delta$ be a finite simplicial complex and let $k$ be a field. For every $q \geq 0$, there is a short exact sequence
\[
0 \to \widetilde H_q(\Delta; \mathbb Z) \otimes_{\mathbb Z} k \to \widetilde H_q (\Delta;k) \to \operatorname{Tor}_1^{\mathbb Z} \left(\widetilde H_{q-1}(\Delta; \mathbb Z),k \right) \to 0.
\]
\end{theorem}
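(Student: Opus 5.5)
The Universal Coefficient Theorem is a standard result, and the paper takes it from \cite[Section~3.A]{hatcher}. Still, here is how I would prove it directly from the simplicial chain complex.

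\textbf{Setup.} Let $C_\bullet=\widetilde C_\bullet(\Delta;\mathbb Z)$ be the augmented simplicial chain complex. It is a complex of finitely generated free abelian groups, and $\widetilde C_\bullet(\Delta;k)=C_\bullet\otimes_{\mathbb Z}k$. Write $Z_q=\ker\partial_q$ and $B_{q-1}=\operatorname{im}\partial_q$. Both are subgroups of free abelian groups, so both are free, since $\mathbb Z$ is a PID.

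\textbf{Step 1: split short exact sequence.} Because $B_{q-1}$ is free, the sequence
\[
0\to Z_\bullet\to C_\bullet\to B_{\bullet-1}\to 0
\]
splits degreewise. It therefore stays exact after applying $-\otimes k$, which gives a short exact sequence of chain complexes
\[
0\to Z_\bullet\otimes k\to C_\bullet\otimes k\to B_{\bullet-1}\otimes k\to 0.
\]
The complexes $Z_\bullet$ and $B_{\bullet-1}$ have zero differentials, so their homology is the complex itself.

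\textbf{Step 2: long exact sequence.} The long exact homology sequence reads
\[
B_q\otimes k\xrightarrow{\ \delta\ } Z_q\otimes k\to \widetilde H_q(\Delta;k)\to B_{q-1}\otimes k\xrightarrow{\ \delta\ } Z_{q-1}\otimes k .
\]
Check that the connecting map $\delta$ is $i\otimes 1$, where $i\colon B\hookrightarrow Z$ is the inclusion. This gives
\[
0\to\operatorname{coker}(i_q\otimes 1)\to\widetilde H_q(\Delta;k)\to\ker(i_{q-1}\otimes 1)\to 0 .
\]

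\textbf{Step 3: identify the two ends.} The sequence
\[
0\to B_{q-1}\xrightarrow{\ i\ } Z_{q-1}\to\widetilde H_{q-1}(\Delta;\mathbb Z)\to 0
\]
is a free resolution of $\widetilde H_{q-1}(\Delta;\mathbb Z)$. Tensoring it with $k$ shows that
\[
\ker(i_{q-1}\otimes 1)=\operatorname{Tor}_1^{\mathbb Z}\bigl(\widetilde H_{q-1}(\Delta;\mathbb Z),k\bigr).
\]
By right exactness of $\otimes$, the same sequence in degree $q$ gives
\[
\operatorname{coker}(i_q\otimes 1)=\widetilde H_q(\Delta;\mathbb Z)\otimes k .
\]
Substituting these into Step 2 yields the stated short exact sequence.

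\textbf{Where care is needed.}
\begin{itemize}
\item Identifying $\delta$ with $i\otimes 1$ in Step 2 requires a diagram chase.
\item Working with the augmented complex makes the degree $-1$ term $\mathbb Z$ for nonempty $\Delta$ and $0$ otherwise. This is consistent with the paper's stated convention for $\widetilde H_{-1}$, including the case $\Gamma=\{\varnothing\}$.
\item For $q=0$ the Tor term involves $\widetilde H_{-1}$, which is either free or zero, so the Tor term vanishes.
\end{itemize}
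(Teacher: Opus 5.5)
Your proof is correct and is the standard argument from Hatcher, Section 3.A, which is the source the paper cites instead of giving its own proof: split $0\to Z_\bullet\to C_\bullet\to B_{\bullet-1}\to 0$ degreewise, tensor with $k$, identify the connecting map with $i\otimes 1$, and read off $\widetilde H_q(\Delta;\mathbb Z)\otimes k$ and $\operatorname{Tor}_1^{\mathbb Z}(\widetilde H_{q-1}(\Delta;\mathbb Z),k)$ from the free resolution $0\to B_{q-1}\to Z_{q-1}\to \widetilde H_{q-1}(\Delta;\mathbb Z)\to 0$. Using the augmented complex, with $\widetilde C_{-1}=\mathbb Z$ whenever $\varnothing\in\Delta$, matches the paper's $\widetilde H_{-1}$ convention; your side remark about the degree $-1$ term is correct only if ``nonempty'' means ``contains the empty face'' rather than ``has a vertex''.
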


The $\operatorname{Tor}$-term is the possible source of characteristic-dependence: if $\widetilde H_{q-1}(\Delta;\mathbb Z)$ is torsion-free, then this term vanishes. When $q=0$, our convention gives $\widetilde H_{-1}(\Delta;\mathbb Z)\cong 0$ or $\mathbb Z$, so no torsion arises from degree -1.

\begin{corollary}\label{UniversalCoefficientTheoremCorollary}
If $\widetilde H_q (\Delta; \mathbb Z)$ is torsion-free for every $q$, then 
\[
\dim_k \widetilde H_q (\Delta;k) = \operatorname{rank}_{\mathbb Z} \widetilde H_q (\Delta; \mathbb Z)
\]
for every field $k$. In particular, this dimension is independent of $\operatorname{char}(k)$.
\end{corollary}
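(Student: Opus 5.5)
The statement to prove is Corollary~\ref{UniversalCoefficientTheoremCorollary}. The plan is to apply the short exact sequence of Theorem~\ref{UniversalCoefficientTheorem} in each degree $q\geq 0$, and to treat the degrees $q\leq -1$ directly from the stated convention.

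For $q\geq 0$, the sequence reads
\[
0 \to \widetilde H_q(\Delta;\mathbb Z)\otimes_{\mathbb Z} k \to \widetilde H_q(\Delta;k)\to \operatorname{Tor}_1^{\mathbb Z}\bigl(\widetilde H_{q-1}(\Delta;\mathbb Z),k\bigr)\to 0 .
\]

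First I show the Tor-term vanishes. Since $\Delta$ is finite, its simplicial chain complex consists of finitely generated free abelian groups, so every $\widetilde H_{q-1}(\Delta;\mathbb Z)$ is finitely generated. When $q-1\geq 0$ this group is torsion-free by hypothesis. When $q=0$ the convention makes $\widetilde H_{-1}(\Delta;\mathbb Z)$ equal to $0$ or $\mathbb Z$. In either case the group is a finitely generated torsion-free abelian group, hence free, say $\mathbb Z^{r}$. Then $\operatorname{Tor}_1^{\mathbb Z}(\mathbb Z^r,k)=0$, because $\operatorname{Tor}_1$ vanishes when one argument is free (or flat).

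With the Tor-term gone, exactness gives an isomorphism $\widetilde H_q(\Delta;\mathbb Z)\otimes_{\mathbb Z}k\cong \widetilde H_q(\Delta;k)$. Writing $\widetilde H_q(\Delta;\mathbb Z)\cong\mathbb Z^{s}$ with $s$ its rank, the left side is $\mathbb Z^s\otimes_{\mathbb Z} k\cong k^s$. Hence $\dim_k\widetilde H_q(\Delta;k)=s=\operatorname{rank}_{\mathbb Z}\widetilde H_q(\Delta;\mathbb Z)$ for all $q\geq 0$.

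For $q=-1$, both groups are given by the same convention: $\widetilde H_{-1}$ is $\mathbb Z$ and $k$ respectively when $\Delta=\{\varnothing\}$, and $0$ otherwise. For $q<-1$ both groups are zero. So the equality of dimension and rank holds in every degree.

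The right-hand side $\operatorname{rank}_{\mathbb Z}\widetilde H_q(\Delta;\mathbb Z)$ does not mention $k$, which gives independence from $\operatorname{char}(k)$. The only delicate point is the boundary degree $q=0$, where the Tor-term involves $\widetilde H_{-1}$; the convention handles it as in the second paragraph. The remaining ingredient is the standard fact that finitely generated torsion-free abelian groups are free.
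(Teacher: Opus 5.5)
Your proposal is correct and follows the same route as the paper: torsion-freeness of $\widetilde H_{q-1}(\Delta;\mathbb Z)$ kills the $\operatorname{Tor}$-term in the Universal Coefficient sequence, and a finitely generated torsion-free abelian group is free, which gives $\widetilde H_q(\Delta;k)\cong k^r$. Your explicit treatment of $q=0$ through the $\widetilde H_{-1}$ convention, and of the degrees $q\le -1$, spells out what the paper covers only in its remark before the corollary and by restricting to $q\geq 0$.
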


\begin{proof}
Fix $q\geq0$. Since $\widetilde H_{q-1} (\Delta ; \mathbb Z)$ is torsion-free by assumption, the $\operatorname{Tor}$-term becomes 0. Consequently, by Theorem~\ref{UniversalCoefficientTheorem},
\[
\widetilde H_q(\Delta ; k) \cong \widetilde H_q(\Delta ; \mathbb Z) \otimes_{\mathbb Z} k.
\]

Because $\Delta$ is finite, $\widetilde H_q (\Delta ; \mathbb Z)$ is a finitely generated abelian group. Since it is torsion-free, it is free abelian, say 
\[
\widetilde H_q (\Delta; \mathbb Z) \cong \mathbb Z^r. 
\]
Therefore,
\[
\widetilde H_q (\Delta;k) \cong \mathbb Z^r \otimes_{\mathbb Z} k \cong k^r.
\]
Thus, \[
\dim_k \widetilde H_q(\Delta;k) = r = \operatorname{rank}_{\mathbb Z}\widetilde H_q (\Delta; \mathbb Z), 
\]
which is independent of $\operatorname{char}(k)$.
\end{proof}

\begin{theorem}[Hochster's Formula {\cite[Corollary~5.12]{MillerSturmfels2005}}]\label{Hochstersformula}
Let $I_\Delta\subseteq S=k[x_1,\dots,x_n]$ be a Stanley--Reisner ideal. For
$W\subseteq[n]$, write
\[
x_W=\prod_{r\in W}x_r.
\]
Then the multigraded Betti numbers are given by
\[
\beta_{i,x_W}^S(S/I_\Delta)
=
\dim_k \widetilde H_{|W|-i-1}(\Delta_W;k).
\]
Consequently, the graded Betti numbers are given by
\[
\beta_{i,j}^S(S/I_\Delta)
=
\sum_{\substack{W\subseteq[n]\\ |W|=j}}
\dim_k \widetilde H_{j-i-1}(\Delta_W;k).
\]

Moreover, $\beta_{i,\mathbf a}^S(S/I_\Delta)=0$ if $\mathbf a\notin\{0,1\}^n$.
\end{theorem}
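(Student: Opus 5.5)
The plan is to compute the multigraded Betti numbers as Koszul homology and identify each squarefree multidegree strand of the Koszul complex with a simplicial (co)chain complex of an induced subcomplex. Since $\beta^S_{i,\mathbf a}(S/I_\Delta)=\dim_k\operatorname{Tor}_i^S(S/I_\Delta,k)_{\mathbf a}$, and $\operatorname{Tor}$ is balanced, I will resolve $k$ by the Koszul complex $K_\bullet=K(x_1,\dots,x_n)$. Then
\[
\beta^S_{i,\mathbf a}(S/I_\Delta)=\dim_k H_i\bigl(K_\bullet\otimes_S S/I_\Delta\bigr)_{\mathbf a}.
\]
Here $K_i\otimes S/I_\Delta$ is free over $S/I_\Delta$ on the basis $e_F$ with $|F|=i$ and $\deg e_F=x_F$. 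The differential is $d(e_F)=\sum_{j\in F}\pm x_j e_{F\setminus j}$. Because the differential preserves multidegree, the complex splits into strands, and I treat each strand $\mathbf a$ separately.

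For a squarefree multidegree $\mathbf a=x_W$, the degree-$x_W$ part of $K_i\otimes S/I_\Delta$ has $k$-basis $x_{W\setminus F}\,e_F$. Here $F\subseteq W$, $|F|=i$, and $x_{W\setminus F}\notin I_\Delta$, that is, $W\setminus F\in\Delta_W$. Setting $G=W\setminus F$ indexes the basis by faces $G\in\Delta_W$ with $|G|=|W|-i$, i.e.\ faces of dimension $|W|-i-1$. Under this indexing the Koszul differential $e_F\mapsto\sum\pm x_je_{F\setminus j}$ becomes $G\mapsto\sum_{j\in W\setminus G,\ G\cup j\in\Delta}\pm(G\cup j)$. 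This is the simplicial coboundary of the augmented cochain complex of $\Delta_W$, with the empty face included in degree $-1$.

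The step requiring the most care is checking that the signs match up to a consistent rescaling of the basis. I will fix the order of $W$ and multiply each basis element by the sign of the shuffle permutation $(F,W\setminus F)$; this is the standard trick and should make the differentials agree. Granting it,
\[
H_i(K\otimes S/I_\Delta)_{x_W}\cong\widetilde H^{|W|-i-1}(\Delta_W;k).
\]
Over a field, reduced cohomology and reduced homology have the same dimension, since the cochain complex is the $k$-dual of the chain complex. This includes the degree $-1$ convention: the case $\Delta_W=\{\varnothing\}$, which occurs for $W=\varnothing$ or when $W$ contains no vertex of $\Delta$, is handled by the empty face. This gives the multigraded formula.

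For a non-squarefree $\mathbf a$, pick $j$ with $a_j\ge2$ and split the strand according to whether $j\in F$. This exhibits the strand as the mapping cone of multiplication by $x_j$ from the degree-$(\mathbf a-e_j)$ pieces to the degree-$\mathbf a$ pieces, restricted to the basis elements $e_F$ with $j\notin F$. Since $I_\Delta$ is squarefree and $a_j-1\ge1$, a monomial $x^{\mathbf b}$ with $b_j\ge1$ lies in $I_\Delta$ if and only if $x^{\mathbf b+e_j}$ does. Hence this multiplication map is an isomorphism, so the cone is exact and $\beta_{i,\mathbf a}=0$.

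The graded formula then follows by summing the multigraded Betti numbers over all $\mathbf a$ of total degree $j$. Only the squarefree $\mathbf a=x_W$ with $|W|=j$ contribute, and each contributes $\dim_k\widetilde H_{j-i-1}(\Delta_W;k)$.
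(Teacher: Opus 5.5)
Your proof is correct. The paper gives no argument for this statement: Hochster's formula is taken from Miller--Sturmfels and used as a black box. Your proposal therefore supplies a self-contained proof of something the paper only cites. Your route is the standard one:
compute $\operatorname{Tor}^S_i(S/I_\Delta,k)$ via the Koszul resolution of $k$; observe that the degree-$x_W$ strand of $K_\bullet\otimes_S S/I_\Delta$, reindexed by $G=W\setminus F$, is the augmented cochain complex of $\Delta_W$; and pass from cohomology to homology by $k$-duality.

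One precision on the sign step, which you left as ``should''. Take $d(e_F)=\sum_{j\in F}(-1)^{\#\{f\in F:\,f<j\}}x_je_{F\setminus j}$, $\epsilon(F)=(-1)^{\#\{(f,g)\in F\times G:\,f>g\}}$ and $b_G=\epsilon(F)\,x_Ge_F$. A direct computation gives $d(b_G)=(-1)^{i-1}\delta(G)$, where $\delta(G)=\sum_{j\in W\setminus G,\ G\cup j\in\Delta}(-1)^{\#\{g\in G:\,g<j\}}(G\cup j)$. So the differentials agree only up to a sign depending on the homological degree $i$. This changes neither kernels nor images, so your conclusion is unaffected.

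For non-squarefree $\mathbf a$, your mapping-cone argument is an elementary alternative to the more common route through the Taylor resolution. There, every least common multiple of squarefree generators is squarefree, so no other multidegree can carry a Betti number. The paper relies on that fact implicitly in the proof of Theorem~\ref{allfivevariabletheorem}, when it discards multidegrees not occurring in $\mathbb T_M$. Your argument needs only that multiplication by $x_j$ identifies the two relevant strands when $a_j\ge 2$, and it avoids Taylor entirely.

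What your approach buys over the citation is clarity about where the field enters. Up to the signs above, the strand is the reindexed complex $\widetilde C^{\bullet}(\Delta_W;\mathbb Z)\otimes_{\mathbb Z}k$ for every $k$. Hence the characteristic can affect $\beta_{i,x_W}$ only through the integral (co)homology of $\Delta_W$, which is exactly the mechanism exploited in Corollary~\ref{torsionfreecriterion}. The citation buys brevity, and nothing more is needed for the paper's logic.
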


\begin{exam}\label{hollowtetrahedronexample}
Consider the simplicial complex $\Delta$ on vertices
$\{1,2,3,4,5\}$ whose facets are
\[
\{2,3,4\},\quad
\{2,3,5\},\quad
\{2,4,5\},\quad
\{3,4,5\},\quad
\{1,2\},\quad
\{1,4\}.
\]

Geometrically, this is the boundary of a hollow tetrahedron on vertices $\{2,3,4,5\}$, together with a hollow triangle on vertices $\{1,2,4\}$ attached along the edge $\{2,4\}$, as shown in Figure~\ref{fig:hollow-tetrahedron-attached-triangle}.

\begin{figure}[htbp]
\centering 
\begin{tikzpicture}[scale=.99, 
    vertex/.style={circle, fill=black, inner sep=1.2pt},
    edge/.style={line width=0.9pt},
    hidden/.style={line width=0.9pt, dashed}
]

\coordinate (x1) at (0,0);
\coordinate (x2) at (0.35,2.0);
\coordinate (x3) at (2.0,4.0);
\coordinate (x4) at (2.45,1.15);
\coordinate (x5) at (4.0,2.45);

\draw[edge] (x2) -- (x3);
\draw[edge] (x3) -- (x5);
\draw[edge] (x5) -- (x4);
\draw[edge] (x4) -- (x3);
\draw[edge] (x2) -- (x4);
\draw[hidden] (x2) -- (x5);

\draw[edge] (x1) -- (x2);
\draw[edge] (x1) -- (x4);

\node[vertex] at (x1) {};
\node[vertex] at (x2) {};
\node[vertex] at (x3) {};
\node[vertex] at (x4) {};
\node[vertex] at (x5) {};

\node[left] at (x1) {$x_1$};
\node[left] at (x2) {$x_2$};
\node[above] at (x3) {$x_3$};
\node[below] at (x4) {$x_4$};
\node[right] at (x5) {$x_5$};

\end{tikzpicture}
\caption{A hollow tetrahedron on vertices $x_2,x_3,x_4,x_5$ with a hollow triangle attached along the edge $x_2x_4$.}
    \label{fig:hollow-tetrahedron-attached-triangle}
\end{figure}

Then, the minimal nonfaces of $\Delta$ are $\{1,3\}, \{1,5\}, \{1,2,4\}, \{2,3,4,5\}. $

Using the correspondence $F\mapsto x_F=\prod_{i\in F}x_i$, these become
\[
x_{\{1,3\}}=x_1x_3,\quad
x_{\{1,5\}}=x_1x_5,\quad
x_{\{1,2,4\}}=x_1x_2x_4,\quad
x_{\{2,3,4,5\}}=x_2x_3x_4x_5.
\]
Therefore the Stanley--Reisner ideal is
\[
I_\Delta=(x_1x_3,\ x_1x_5,\ x_1x_2x_4,\ x_2x_3x_4x_5).
\]

Topologically, the boundary of the tetrahedron contributes a 2-dimensional hole, while the attached hollow triangle contributes a 1-dimensional hole. Therefore, $\Delta$ is homotopy equivalent to $S^2 \vee S^1$. Thus,

\[
\widetilde H_2 (\Delta ; k) \cong k, \quad \widetilde H_1 (\Delta ; k) \cong k, \quad \widetilde H_0 (\Delta ; k) = 0.
\]
Taking $W=[5]$ in Hochster's formula, we get
\[
\beta_{i,5}^S(S/I_\Delta)
=
\dim_k\widetilde H_{5-i-1}(\Delta;k).
\]

Therefore,
\[
\beta_{2,5}^S(S/I_\Delta)=1
\]
because $5-2-1=2$, so Hochster uses
\[
\widetilde H_2(\Delta;k)\cong k.
\]
Further,
\[
\beta_{3,5}^S(S/I_\Delta)=1
\]
because $5-3-1=1$, so Hochster uses
\[
\widetilde H_1(\Delta;k)\cong k.
\]

This example demonstrates the bridge between an algebraic property and a topological property through Hochster's formula: topological holes in induced subcomplexes can contribute directly to Betti numbers.
    
\end{exam}

\begin{corollary}[Torsion-Free Criterion]\label{torsionfreecriterion}
Let $\Delta$ be a simplicial complex on vertex set contained in $[n]$. Suppose that for every $W \subseteq [n]$ and every $q$, the group 
\[
\widetilde H_q(\Delta_W;\mathbb Z)
\]
is torsion-free. Then the multigraded Betti numbers, and consequently the graded Betti numbers, of $S/I_\Delta$ are independent of $\operatorname{char}(k)$.
\end{corollary}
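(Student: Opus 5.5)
The plan is to combine Hochster's formula (Theorem~\ref{Hochstersformula}) with Corollary~\ref{UniversalCoefficientTheoremCorollary}, applied to each induced subcomplex $\Delta_W$ separately.

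First we treat the multigraded Betti numbers. By the final clause of Theorem~\ref{Hochstersformula}, $\beta^S_{i,\mathbf a}(S/I_\Delta)=0$ for every field $k$ whenever $\mathbf a\notin\{0,1\}^n$. Such multidegrees are therefore trivially characteristic-independent. Now take a squarefree multidegree $x_W$ with $W\subseteq[n]$. Hochster's formula gives
\[
\beta^S_{i,x_W}(S/I_\Delta)=\dim_k\widetilde H_{|W|-i-1}(\Delta_W;k).
\]
Set $q=|W|-i-1$ and split into cases according to $q$.

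\begin{itemize}
\item If $q\geq 0$, the hypothesis says every $\widetilde H_p(\Delta_W;\mathbb Z)$ is torsion-free. Corollary~\ref{UniversalCoefficientTheoremCorollary}, applied to the finite complex $\Delta_W$, then gives $\dim_k\widetilde H_q(\Delta_W;k)=\operatorname{rank}_{\mathbb Z}\widetilde H_q(\Delta_W;\mathbb Z)$. This quantity does not involve $k$.
\item If $q=-1$, the convention gives dimension $1$ exactly when $\Delta_W=\{\varnothing\}$ and $0$ otherwise. This is combinatorial data of $\Delta$ and does not depend on $k$.
\item If $q<-1$, the dimension is $0$.
\end{itemize}

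In every case $\beta^S_{i,x_W}(S/I_\Delta)$ equals an integer determined by $\Delta$ alone.

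For the graded Betti numbers, the second display in Theorem~\ref{Hochstersformula} writes $\beta^S_{i,j}(S/I_\Delta)$ as a finite sum of the multigraded numbers over $|W|=j$. Each summand is independent of $k$, so the sum is as well. Total Betti numbers follow in the same way by summing over $j$, though the statement only asks for the graded ones.

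I do not expect a genuine obstacle. The only care needed is with the low-degree edge cases, namely $q=-1$ (where $W=\varnothing$ or $\Delta_W=\{\varnothing\}$) and $q=0$. The Universal Coefficient Theorem as stated covers only $q\ge 0$, so the degenerate degrees must be handled directly by the stated conventions. One should also note that the proof of Corollary~\ref{UniversalCoefficientTheoremCorollary} uses torsion-freeness of $\widetilde H_{q-1}$. When $q=0$ this is $\widetilde H_{-1}$, which is $0$ or $\mathbb Z$ and hence torsion-free automatically.
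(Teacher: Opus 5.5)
Your proposal is correct and follows the paper's proof essentially step for step: you apply Hochster's formula to each $x_W$, split on $q=|W|-i-1$ into $q\geq 0$ (handled by Corollary~\ref{UniversalCoefficientTheoremCorollary}) and $q=-1$, $q<-1$ (handled by the conventions), and then sum over $|W|=j$ for the graded numbers. Your explicit treatment of non-squarefree multidegrees via the last clause of Theorem~\ref{Hochstersformula} is a small completeness point that the paper's proof of this corollary leaves implicit.
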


\begin{proof}
By multigraded Hochster's formula, for every $W\subseteq[n]$,
\[
\beta_{i,x_W}^S(S/I_\Delta)
=
\dim_k \widetilde H_{|W|-i-1}(\Delta_W;k).
\]
Set $q=|W|-i-1$.

If $q=-1$, then by our convention $\widetilde H_{-1}(\Delta_W;k)$ is either
$0$ or $k$, according as $\Delta_W\neq\{\varnothing\}$ or
$\Delta_W=\{\varnothing\}$. Hence its dimension is independent of
$\operatorname{char}(k)$. If $q<-1$, then
$\widetilde H_q(\Delta_W;k)=0$ by convention.

For $q\geq 0$, Corollary~\ref{UniversalCoefficientTheoremCorollary} shows that
\[
\dim_k \widetilde H_q(\Delta_W;k)
\]
depends only on the rank of $\widetilde H_q(\Delta_W;\mathbb Z)$, and not on the
characteristic of $k$.

Therefore every multigraded Betti number is characteristic-independent. Summing over all $W$ with $|W|=j$ gives characteristic-independence of the graded Betti
numbers.
\end{proof}

\section{The Five-Vertex Topology}
We now use the following topological result which appears as Proposition 3.4 in \cite{govc2025fundamental}. The following small-vertex classification result is the only external topological input in the proof of the main theorem.

\begin{theorem}\label{fivevertextopology}
Every connected simplicial complex on at most five vertices is homotopy equivalent to a wedge of spheres.
\end{theorem}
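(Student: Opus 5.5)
We need to prove Theorem~\ref{fivevertextopology}: every connected simplicial complex on at most five vertices is homotopy equivalent to a wedge of spheres. The plan is to reduce the number of vertices one at a time, using links and the cone/suspension structure, and to keep track of connectivity.

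\textbf{Step 1: the full simplex and cones.} If $\Delta$ is the full simplex on its vertex set, it is contractible, which is the empty wedge. More generally, if some vertex $v$ lies in every facet, then $\Delta$ is a cone with apex $v$ and is again contractible.

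\textbf{Step 2: the standard decomposition.} Otherwise choose a vertex $v$ and write
\[
\Delta=\operatorname{del}(v)\cup\operatorname{star}(v),\qquad \operatorname{del}(v)\cap\operatorname{star}(v)=\operatorname{lk}(v).
\]
The star is contractible, so $\Delta\simeq \operatorname{del}(v)\cup C(\operatorname{lk}(v))$. Here $\operatorname{del}(v)$ has at most four vertices and $\operatorname{lk}(v)$ has at most four vertices. If the inclusion $\operatorname{lk}(v)\hookrightarrow\operatorname{del}(v)$ is null-homotopic, then
\[
\Delta\simeq \operatorname{del}(v)\vee\Sigma\operatorname{lk}(v).
\]
Inductively, complexes on at most four vertices are disjoint unions of wedges of spheres; this is checked quickly, and the suspension of such a union is again a wedge of spheres. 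So in this case $\Delta$ is a wedge of spheres.

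The key is therefore to choose $v$ so that the inclusion is null-homotopic. The plan is to pick $v$ such that $\operatorname{lk}(v)$ is contained in a cone inside $\operatorname{del}(v)$, namely in the star of some other vertex $w$ of $\operatorname{del}(v)$.

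\textbf{Step 3: choosing the vertex $v$.} When no such pair $v,w$ exists, I would argue by dimension instead.
\begin{itemize}
\item If $\dim\Delta\le 1$, then $\Delta$ is a connected graph and hence a wedge of circles.
\item If $\dim\Delta\ge 3$, a $3$-face uses four of the five vertices. The remaining vertex $v$ then has link inside the boundary of that simplex, which is contained in $\operatorname{del}(v)$. The inclusion is null-homotopic whenever the link is a proper subcomplex of the boundary $2$-sphere, since the boundary minus a facet is contractible. If the link is the full boundary, then $\Delta$ is the boundary of the $4$-simplex, which is $S^3$, or a cone.
\item So the remaining case is $\dim\Delta=2$, which is the only genuinely delicate one.
\end{itemize}

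\textbf{Main obstacle: the two-dimensional case.} This is where I expect the real difficulty. Here the main tool is the fact that a $2$-dimensional complex whose fundamental group is free and whose $H_2$ is free is a wedge of spheres once simple connectivity issues are settled. For five vertices, I would show two things:
\begin{itemize}
\item $\pi_1$ is free, using the fact that the $1$-skeleton has few independent cycles and that each triangle kills a cycle of a graph on five vertices. Any $2$-complex with at most $\binom{5}{3}=10$ triangles that is not simply a graph-with-disks would need $\mathbb{RP}^2$-like identifications, and those require six vertices.
\item The attaching maps kill distinct free generators or are trivial, so that collapsing a maximal tree gives a presentation complex Andrews--Curtis equivalent to a standard wedge.
\end{itemize}
Concretely, I would pick a maximal contractible subcomplex $K$ (collapse free faces) and analyze $\Delta/K$. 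If this gets unwieldy, the fallback is a finite case check: enumerate $2$-complexes on five vertices up to isomorphism after elementary collapses, of which there are few, and identify each one.
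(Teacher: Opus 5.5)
Your reduction is sound in dimensions $\le 1$ and $\ge 3$. In the latter case $\operatorname{del}(v)$ is the full $3$-simplex on the other four vertices, so the inclusion is always null-homotopic and $\Delta\simeq\Sigma\operatorname{lk}(v)$; the case split on the boundary sphere is unnecessary. The genuine gap is the $2$-dimensional case, which carries the whole content of the theorem, and there you give no argument. The paper does not prove the statement either; it quotes Proposition~3.4 of Govc--Marzantowicz--Michalak--Pave\v{s}i\'c. So this case is exactly what you would have to supply.

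The specific problems are these. The claim that $\pi_1$ is free because ``$\mathbb{RP}^2$-like identifications require six vertices'' restates what must be proved. The claim that the attaching maps ``kill distinct free generators or are trivial'' is false as stated: a triangle none of whose edges lies in the maximal tree yields a relator of length three. The fallback enumeration is not carried out.

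Your Step~2 vertex-star criterion also genuinely fails in this range. Take the five-vertex M\"obius band with triangles $\{i,i+1,i+2\}$ (indices mod $5$). Here $\operatorname{lk}(1)$ is the path with edges $45,52,23$, which lies in no vertex star of $\operatorname{del}(1)$, and by cyclic symmetry the same happens at every vertex. This is precisely the complex whose boundary wraps twice around its core, and coning that boundary from a sixth vertex gives the six-vertex $\mathbb{RP}^2$. A proof must therefore explain concretely why five vertices cannot kill that double wrap.

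You can close the gap inside your own framework by using a criterion on $\operatorname{del}(v)$ rather than on vertex stars. Let $\dim\Delta=2$ with exactly five vertices, and recall that $\Delta=\operatorname{del}(v)\cup C\operatorname{lk}(v)\simeq\operatorname{del}(v)/\operatorname{lk}(v)$, with $\operatorname{lk}(v)$ a graph. There are three cases.

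\emph{Some $\operatorname{del}(v)$ contains no triangle.} Then this quotient is a connected $1$-dimensional CW complex, hence a wedge of circles.

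\emph{Some $\operatorname{del}(v)$ contains at least three triangles.} Then it is a disk or $\partial\Delta^3$, hence simply connected. So the graph $\operatorname{lk}(v)$ includes null-homotopically, and $\Delta\simeq\operatorname{del}(v)\vee\Sigma\operatorname{lk}(v)$.

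\emph{Otherwise}, for every $v$ exactly one or two triangles avoid $v$. Then the graph on $[5]$ whose edges are the complements of the triangles has all degrees in $\{1,2\}$, so it is $P_5$, $C_5$, $P_3\sqcup P_2$ or $C_3\sqcup P_2$. The corresponding triangle sets are:
\begin{enumerate}
\item $\{123,125,145,345\}$: a fan disk and a triangle meeting in an edge and a point, $\simeq S^1$;
\item the M\"obius band, $\simeq S^1$;
\item $\{123,145,345\}$: two disks meeting in two points, $\simeq S^1$;
\item $\{123,145,245,345\}$: a three-page book and a triangle meeting in three points, $\simeq S^1\vee S^1$.
\end{enumerate}
Any remaining edges only wedge on circles. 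This short finite check replaces both the heuristic $\pi_1$ argument and any appeal to the structure theory of $2$-complexes with free fundamental group.
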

Here a point is allowed as the empty wedge of spheres.

\begin{corollary}\label{fivevertextorsionfree}
Let $\Delta$ be a simplicial complex whose vertex set is contained in $[n]$, with
$n\leq 5$. Then for every $W\subseteq[n]$ and every $q$,
\[
\widetilde H_q(\Delta_W;\mathbb Z)
\]
is torsion-free.
\end{corollary}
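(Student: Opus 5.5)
The plan is to reduce to the connected components of $\Delta_W$ and apply Theorem~\ref{fivevertextopology} to each of them. Fix $W\subseteq[n]$ and note that $\Delta_W$ is a simplicial complex whose vertex set is contained in $W$, so it has at most five vertices. We first dispose of the degenerate cases. If $\Delta_W=\{\varnothing\}$ (or $\Delta_W$ is the void complex), then by the stated convention every $\widetilde H_q(\Delta_W;\mathbb Z)$ is $0$ or $\mathbb Z$. Hence it is torsion-free. For $q<-1$ the group is $0$, and for $q=-1$ it is $0$ whenever $\Delta_W$ has a nonempty face.

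Now suppose $\Delta_W$ has at least one vertex, and let $C_1,\dots,C_c$ be its connected components. Each $C_j$ is a connected simplicial complex on at most five vertices. By Theorem~\ref{fivevertextopology}, $C_j\simeq\bigvee_{\ell} S^{d_\ell}$, where a point counts as the empty wedge. Since homology is a homotopy invariant, $\widetilde H_q(C_j;\mathbb Z)\cong\bigoplus_\ell \widetilde H_q(S^{d_\ell};\mathbb Z)$, which is a free abelian group $\mathbb Z^{\#\{\ell:d_\ell=q\}}$. Here I use the standard facts that reduced homology of a wedge of CW complexes is the direct sum, and that $\widetilde H_q(S^d;\mathbb Z)$ is $\mathbb Z$ when $q=d$ and $0$ otherwise.

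Next we assemble the components. For $q\ge1$ we have $\widetilde H_q(\Delta_W;\mathbb Z)=H_q(\Delta_W;\mathbb Z)\cong\bigoplus_j H_q(C_j;\mathbb Z)=\bigoplus_j\widetilde H_q(C_j;\mathbb Z)$, which is a direct sum of free abelian groups and hence torsion-free. For $q=0$ the recalled formula gives $\widetilde H_0(\Delta_W;\mathbb Z)\cong\mathbb Z^{c-1}$, which is free.

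No step is a real obstacle, since the topological content is entirely in Theorem~\ref{fivevertextopology}. The only points requiring care are these: Theorem~\ref{fivevertextopology} requires connectedness, which forces the componentwise argument above; the empty and void cases must be handled through the convention; and one must observe that an induced subcomplex has vertex set of size at most $|W|\le5$.
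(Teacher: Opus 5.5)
Your proposal is correct and follows the paper's proof essentially step for step: you dispose of the case $\Delta_W=\{\varnothing\}$, split $\Delta_W$ into connected components, apply Theorem~\ref{fivevertextopology} to each component, and reassemble using the direct-sum decomposition for $q\ge 1$ and $\widetilde H_0(\Delta_W;\mathbb Z)\cong\mathbb Z^{c-1}$. Your explicit treatment of the void complex, and of $q=-1$ when $\Delta_W$ has a vertex, spells out details the paper leaves implicit.
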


\begin{proof}
Let $U=W\cap V(\Delta)$. Then $\Delta_W=\Delta_U$, since vertices outside $V(\Delta)$ do not belong to any nonempty face of $\Delta$. If $U=\varnothing$, then $\Delta_W=\{\varnothing\}$, so its reduced homology is torsion-free.

Assume $U\neq\varnothing$, and write
\[
\Delta_W=C_1\sqcup\cdots\sqcup C_c
\]
as a disjoint union of connected components. Each $C_\ell$ has at most five vertices, so by Theorem~\ref{fivevertextopology}, each $C_\ell$ is homotopy equivalent to a wedge of spheres. Hence each $\widetilde H_q(C_\ell;\mathbb Z)$ is free abelian. For $q>0$,
\[
\widetilde H_q(\Delta_W;\mathbb Z)
\cong
\bigoplus_{\ell=1}^c \widetilde H_q(C_\ell;\mathbb Z),
\]
and
\[
\widetilde H_0(\Delta_W;\mathbb Z)\cong \mathbb Z^{c-1}.
\]
Thus every reduced integral homology group of $\Delta_W$ is free abelian, hence torsion-free.
\end{proof}

\section{Characteristic-Independence for Monomial Ideals in Five Variables}

We first prove the result for squarefree monomial ideals.

\begin{theorem}\label{squarefreefivevariabletheorem}
Let $S=k[x_1,\dots,x_n]$ with $n\leq 5$, and let $I\subseteq S$ be a squarefree monomial ideal. Then the multigraded and graded Betti numbers of $S/I$ are independent of the characteristic of $k$.
\end{theorem}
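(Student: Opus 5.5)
The proof is a direct assembly of the results already established. Since $I$ is a squarefree monomial ideal in $S=k[x_1,\dots,x_n]$, it is a Stanley--Reisner ideal: set $\Delta=\Delta_I=\{F\subseteq[n]: x_F\notin I\}$, so that $I=I_\Delta$. A subtle point is that $\Delta$ must be defined combinatorially, independent of $k$. Whether a squarefree monomial $x_F$ lies in $I$ depends only on the minimal monomial generators of $I$, which are squarefree monomials, i.e.\ on combinatorial data. Thus when we speak of ``the same ideal over different fields,'' we mean the ideal generated by the same set of squarefree monomials, and $\Delta$ is the same complex for every $k$.

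Next, apply Corollary~\ref{fivevertextorsionfree}. Since the vertex set of $\Delta$ is contained in $[n]$ with $n\le 5$, for every $W\subseteq[n]$ and every $q$ the group $\widetilde H_q(\Delta_W;\mathbb Z)$ is torsion-free. This is exactly the hypothesis of the Torsion-Free Criterion, Corollary~\ref{torsionfreecriterion}. That corollary yields that every multigraded Betti number $\beta^S_{i,x_W}(S/I_\Delta)$ equals $\dim_k\widetilde H_{|W|-i-1}(\Delta_W;k)$, which equals either the rank of a free abelian group determined by $\Delta$ or a quantity fixed by the convention in degrees $\le -1$. In either case it does not depend on $\operatorname{char}(k)$.

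To cover all multidegrees, observe that by the last clause of Hochster's formula (Theorem~\ref{Hochstersformula}) the multigraded Betti numbers vanish in non-squarefree multidegrees $\mathbf a\notin\{0,1\}^n$, for every field. So those are trivially characteristic-independent. The graded Betti numbers $\beta_{i,j}^S(S/I)$ are finite sums of multigraded ones over $|W|=j$, hence also characteristic-independent.

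There is no real obstacle; the only step requiring care is the bookkeeping just described. One must make precise that $\Delta$ and the indexing set of multidegrees are field-independent, and that the edge cases, where $\Delta_W=\{\varnothing\}$ or vertices of $[n]$ do not lie in $\Delta$, are covered. The latter are already handled inside Corollaries~\ref{torsionfreecriterion} and~\ref{fivevertextorsionfree}.
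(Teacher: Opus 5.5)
Your proposal is correct and follows the paper's proof essentially step for step: pass to the Stanley--Reisner complex, invoke Corollary~\ref{fivevertextorsionfree} and the Torsion-Free Criterion (Corollary~\ref{torsionfreecriterion}), note vanishing in non-squarefree multidegrees via Hochster's formula, and sum over $|W|=j$ for the graded numbers. The one step the paper includes that you omit is disposing first of the trivial case $I=S$, where $S/I=0$ and $\Delta_I$ is the void complex, which the case analysis in Corollary~\ref{fivevertextorsionfree} does not literally cover; adding that line completes your argument.
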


\begin{proof}
If $I=S$, then $S/I=0$, so all Betti numbers vanish. Hence assume $I$ is proper. Since $I$ is squarefree, there is a simplicial complex $\Delta$ on a vertex set contained in $[n]$ such that $I=I_\Delta$.

If the multidegree is not squarefree, then the corresponding multigraded Betti number is zero by Hochster's formula. For a squarefree multidegree $x_W$, with $W\subseteq[n]$, Hochster's formula gives
\[
\beta_{i,x_W}^S(S/I_\Delta)
=
\dim_k \widetilde H_{|W|-i-1}(\Delta_W;k).
\]
Since $n\leq 5$, every induced subcomplex $\Delta_W$ has at most five vertices. By Corollary~\ref{fivevertextorsionfree}, every group
\[
\widetilde H_q(\Delta_W;\mathbb Z)
\]
is torsion-free. Therefore, by Corollary~\ref{torsionfreecriterion}, every multigraded Betti number of $S/I$ is independent of $\operatorname{char}(k)$. The graded Betti numbers are obtained by summing multigraded Betti numbers, so they are characteristic-independent as well.
\end{proof}

Next, we extend the result from squarefree monomial ideals to arbitrary monomial ideals, using the squarefree twin ideal reduction from Section~\ref{reductiontosquareidealssection}.

\begin{proof}[Proof of Theorem~\ref{allfivevariabletheorem}]
Let $m=x_1^{\alpha_1}\cdots x_5^{\alpha_5}$ be a Taylor multidegree appearing in the Taylor resolution $\mathbb T_M$ of $S/M$. By the squarefree twin ideal reduction,
\[
\beta_{i,m}^S(S/M)=\beta_{i,y_m}^{T_m}(T_m/M_m''),
\]
where
\[
T_m=k[y_j:\alpha_j>0]
\]
has at most five variables, $M_m''\subseteq T_m$ is the squarefree twin ideal associated to $m$, and
\[
y_m=\prod_{\alpha_j>0}y_j.
\]

If $M_m''=T_m$, then $T_m/M_m''=0$, so
\[
\beta_{i,y_m}^{T_m}(T_m/M_m'')=0
\]
for all $i$. Hence this multigraded Betti number is characteristic-independent. Otherwise, $M_m''$ is a proper squarefree monomial ideal in at most five variables, so Theorem~\ref{squarefreefivevariabletheorem} implies that
\[
\beta_{i,y_m}^{T_m}(T_m/M_m'')
\]
is independent of $\operatorname{char}(k)$. Therefore,
\[
\beta_{i,m}^S(S/M)
\]
is independent of $\operatorname{char}(k)$ for every Taylor multidegree $m$.

If a multidegree does not occur in the Taylor resolution, then the corresponding multigraded Betti number is zero. Hence all multigraded Betti numbers of $S/M$ are characteristic-independent.

Finally, the graded Betti numbers are obtained by summing multigraded Betti numbers of fixed total degree:
\[
\beta_{i,j}^S(S/M)=\sum_{\deg(m)=j}\beta_{i,m}^S(S/M).
\]
Therefore, the graded Betti numbers are characteristic-independent. Summing the graded Betti numbers over all $j$ gives characteristic-independence of the total Betti numbers.
\end{proof}

\section{Characteristic-Dependence for Monomial Ideals in Six Variables}
The previous sections show that monomial ideals in at most five variables have characteristic-independent Betti numbers. The restriction to five variables is sharp: in six variables, Betti numbers of squarefree monomial ideals may depend on the characteristic of the field.

\begin{corollary}\label{sixvariablecorollary}
Let $S=k[x_1,\ldots,x_6]$, and let $M\subseteq S$ be a monomial ideal. Then the multigraded, graded, and total Betti numbers of $S/M$ are the same over all fields of characteristic different from 2. Thus any characteristic-dependence in six variables can only occur in characteristic 2.
\end{corollary}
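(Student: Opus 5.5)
The argument follows the same route as Theorem~\ref{allfivevariabletheorem}, with the five-vertex topology replaced by a torsion bound for six vertices. By Theorem~\ref{squarefreetwinidealreductionthm}, every multigraded Betti number $\beta^S_{i,m}(S/M)$ at a Taylor multidegree $m$ equals $\beta^{T_m}_{i,y_m}(T_m/M_m'')$. Here $T_m$ has at most six variables. Multidegrees outside the Taylor resolution give zero. If $M''_m=T_m$ the Betti number is zero; otherwise $M_m''=I_\Delta$ for a complex $\Delta$ on at most six vertices. By Hochster's formula (Theorem~\ref{Hochstersformula}), the relevant number is $\dim_k\widetilde H_q(\Delta_W;k)$ for an induced subcomplex $\Delta_W$ on at most six vertices. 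Graded and total Betti numbers are sums of multigraded ones, so it suffices to prove the following. For every simplicial complex $\Gamma$ on at most six vertices and every $q$, the group $\widetilde H_q(\Gamma;\mathbb Z)$ has no odd torsion. Granting this, the Universal Coefficient Theorem (Theorem~\ref{UniversalCoefficientTheorem}) gives $\operatorname{Tor}_1^{\mathbb Z}(\widetilde H_{q-1}(\Gamma;\mathbb Z),k)=0$ whenever $\operatorname{char}k\neq 2$. Hence $\dim_k\widetilde H_q(\Gamma;k)=\operatorname{rank}\widetilde H_q(\Gamma;\mathbb Z)$ for all such $k$, exactly as in Corollary~\ref{UniversalCoefficientTheoremCorollary}.

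To prove the torsion claim, I first reduce to connected complexes on exactly six vertices. Reduced homology of a disjoint union is the direct sum of the components' homology in positive degrees and free in degree $0$, as in Corollary~\ref{fivevertextorsionfree}. Any component with at most five vertices is torsion-free by Corollary~\ref{fivevertextorsionfree}. So let $\Gamma$ be connected on six vertices. Then $\widetilde H_0$ is zero, and $\widetilde H_4$, $\widetilde H_5$ are subgroups of free chain groups, hence free. If $\Gamma$ is the full $5$-simplex or its boundary, the homology is known and free. Otherwise some $5$-subset, hence some facet structure, is missing. The only possible torsion lies in $\widetilde H_1,\widetilde H_2$ and in $\widetilde H_3$. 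For $\widetilde H_3$, the cokernel of $\partial_4$ is torsion-free in degree top minus one when the complex has dimension at most $4$ on six vertices; I would check this using the link argument below.

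The main tool is a vertex-deletion Mayer--Vietoris sequence. Pick a vertex $v$ and write $\Gamma=\Gamma_{[6]\setminus v}\cup \operatorname{st}(v)$. The two pieces intersect in $\operatorname{lk}(v)$, a complex on at most five vertices. Both $\Gamma_{[6]\setminus v}$ and $\operatorname{lk}(v)$ have free homology by Corollary~\ref{fivevertextorsionfree}, and the star is contractible. The sequence gives
\[
\widetilde H_q(\operatorname{lk} v)\xrightarrow{\ \iota_*\ }\widetilde H_q(\Gamma_{[6]\setminus v})\to \widetilde H_q(\Gamma)\to \widetilde H_{q-1}(\operatorname{lk} v)\to\cdots.
\]
The image of $\widetilde H_q(\Gamma)$ in $\widetilde H_{q-1}(\operatorname{lk} v)$ is a subgroup of a free group, hence free. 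The torsion of $\widetilde H_q(\Gamma)$ is therefore the torsion of $\operatorname{coker}\iota_*$, a map between free groups of small rank. The main obstacle is showing that this cokernel has no odd torsion, that is, that the elementary divisors of $\iota_*$ are only $1$ or $2$. I would first try to bound ranks. The link has at most five vertices, and $\Gamma_{[6]\setminus v}$ also has at most five vertices. Their homology groups have small rank, realized by explicit spheres such as boundaries of simplices, octahedral pieces, and graph cycles. Moreover, $\iota_*$ is induced by an inclusion, which should force its matrix entries to be $0$ or $\pm1$ with very limited size. The failing case would be $\mathbb Z\xrightarrow{\times p}\mathbb Z$. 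If the rank argument does not rule out odd elementary divisors cleanly, I would fall back on the known finite enumeration of six-vertex complexes up to isomorphism, or on Alesandroni's computation in six variables. That data shows the only torsion occurring is the $\mathbb Z/2$ coming from the six-vertex $\mathbb{RP}^2$. The reduction above would then show that this single case accounts for all characteristic dependence in six variables.
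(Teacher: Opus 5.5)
Your reduction (Taylor multidegrees, squarefree twin ideals, Hochster's formula, then the Universal Coefficient Theorem) is the same as the paper's. It correctly isolates what must be proved: no simplicial complex on at most six vertices has odd torsion in $\widetilde H_*(-;\mathbb Z)$. That lemma is where your proposal has a genuine gap.

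The Mayer--Vietoris step is valid, but it only reformulates the problem. Put $D=\Gamma_{[6]\setminus v}$ and $L=\operatorname{lk}(v)$. Since $\Gamma=D\cup\operatorname{st}(v)$ and $\operatorname{st}(v)$ is the cone over $L$, the torsion of $\operatorname{coker}\iota_*$ is the torsion of the relative group $H_q(D,L;\mathbb Z)$. Moreover, every pair $L\subseteq D$ of complexes on five vertices arises this way: take $\Gamma=D\cup(v*L)$, where $v*L$ is the cone. So you are left with an unrestricted statement about all five-vertex pairs, which you do not prove. Your treatment of $\widetilde H_3$ is also deferred to this same argument.

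The heuristic you offer for this step fails on both counts.
\begin{itemize}
\item Inclusion-induced maps need not have entries in $\{0,\pm1\}$. The six-vertex $\mathbb{RP}^2$ itself shows this: the link of a vertex is a pentagon, the deletion is a M\"obius band, and $\iota_*$ on $H_1$ is multiplication by $2$.
\item Entries in $\{0,\pm1\}$ would not control elementary divisors anyway once the rank exceeds one. For example, $\left(\begin{smallmatrix}1&1&0\\-1&1&1\\0&-1&1\end{smallmatrix}\right)$ has cokernel $\mathbb Z/3$.
\end{itemize}

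Your fallback (an enumeration of six-vertex complexes, or a six-variable computation) is a pointer to data with no precise source, not an argument. The paper closes exactly this step by citing \cite[Theorem~3.6]{govc2025fundamental}: every connected simplicial complex on at most six vertices is either homotopy equivalent to a wedge of spheres or is the minimal triangulation of $\mathbb{RP}^2$, whose only torsion is $H_1\cong\mathbb Z/2$. Apply this componentwise to the induced subcomplexes, as you already set up. It gives the no-odd-torsion lemma immediately, and your proof then coincides with the paper's.

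One minor point concerns your conclusion $\dim_k\widetilde H_q(\Gamma;k)=\operatorname{rank}\widetilde H_q(\Gamma;\mathbb Z)$. Corollary~\ref{UniversalCoefficientTheoremCorollary} assumes torsion-freeness, so it does not apply as stated. You also need the $2$-primary torsion of $\widetilde H_q(\Gamma;\mathbb Z)$ to vanish after tensoring with $k$. This holds because $2$ is invertible in $k$ when $\operatorname{char}k\neq 2$.
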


\begin{proof}
The proof is the same reduction as in Theorem~\ref{allfivevariabletheorem}. For each Taylor multidegree $m$, the squarefree twin ideal $M_m''$ lies in a polynomial ring with at most six variables. Thus the relevant Betti number is computed, by Hochster's formula, from the reduced homology of induced subcomplexes on at most six vertices.

By the theorem of Govc--Marzantowicz--Michalak--Pave{\v{s}}i{\'c} \cite[Theorem~3.6]{govc2025fundamental}, every connected simplicial complex on at most six vertices is either the minimal triangulation of $\mathbb{RP}^2$ or is homotopy equivalent to a wedge of spheres. Wedges of spheres have torsion-free integral homology, while the minimal triangulation of $\mathbb{RP}^2$ has only $\mathbb Z/2$-torsion. Therefore no induced subcomplex on at most six vertices has $p$-torsion for any prime $p\neq 2$.

The Universal Coefficient Theorem then implies that the dimensions of the homology groups appearing in Hochster's formula are the same over all fields of characteristic different from 2. Hence the multigraded Betti numbers are the same over all such fields, and the same is true for graded and total Betti numbers after summing.
\end{proof}

Peeva records the following six-variable example showing that characteristic-dependence can occur for monomial ideals \cite[Example 12.4]{peeva2011graded}. Let
\[
A=k[a,b,c,e,f,h]
\]
and let
\[
B=(abc,\ abf,\ ace,\ ahe,\ ahf,\ bch,\ bhe,\ bef,\ chf,\ cef).
\]
Then the graded Betti numbers of $A/B$ depend on the characteristic of $k$. If
$\operatorname{char}(k)\neq 2$, the nonzero graded Betti numbers of $A/B$ are
\[
\beta_{0,0}=1,\quad
\beta_{1,3}=10,\quad
\beta_{2,4}=15,\quad
\beta_{3,5}=6.
\]
If $\operatorname{char}(k)=2$, then the nonzero graded Betti numbers are
\[
\beta_{0,0}=1,\quad
\beta_{1,3}=10,\quad
\beta_{2,4}=15,\quad
\beta_{3,5}=6,\quad
\beta_{3,6}=1,\quad
\beta_{4,6}=1.
\]
Therefore, the graded Betti numbers and total Betti numbers depend on the characteristic of the field.

This example comes from the six-vertex triangulation of $\mathbb{RP}^2$, due to Reisner's work \cite{reisner1976cohen}. The relevant topological feature is 
\[ 
H_1(\mathbb{RP}^2;\mathbb Z)\cong \mathbb Z/2. 
\] 
Thus, the associated simplicial complex has 2-torsion in integral homology. 

Moreover, the six-variable case is special in a precise sense: due to Govc, Marzantowicz, Michalak, and Pave{\v{s}}i{\'c}, every connected simplicial complex on at most six vertices is either the unique minimal triangulation of $\mathbb{RP}^2$ or is homotopy equivalent to a wedge of spheres \cite[Theorem 3.6]{govc2025fundamental}. Since wedges of spheres have torsion-free integral homology, the only possible source of torsion for a simplicial complex on at most six vertices is the six-vertex triangulation of $\mathbb{RP}^2$. In turn, the only possible torsion for six variables is $\mathbb Z/2$-torsion. For example, characteristic 3-dependence would require $\mathbb Z/3$-torsion somewhere, and that cannot occur on only six vertices \cite{govc2025fundamental}. Other torsion phenomena may become possible in seven or more variables.

By the Universal Coefficient Theorem, this $\mathbb Z/2$-torsion changes the dimensions of homology over fields of characteristic 2, while this contribution disappears over fields of any other characteristic. Hochster's formula then converts this change in homology into a change in Betti numbers. Thus, this example realizes the only possible six-vertex topological obstruction, namely the 2-torsion coming from the minimal triangulation of $\mathbb{RP}^2$.

In sum, for monomial ideals in six variables, any characteristic-dependence of Betti numbers can only distinguish characteristic 2 from characteristics different from 2, while our theorem shows that it cannot occur for monomial ideals in at most five variables.

\section{Enumeration of Squarefree Ideals and Betti Tables} \label{enumerationsection}

The main theorem settles characteristic-independence for all monomial ideals in $k[x_1,\ldots,x_5]$. We also record the following finite enumeration for squarefree monomial ideals in five variables.

Squarefree monomial ideals in $S=k[x_1,\ldots,x_5]$ correspond to antichains in the Boolean lattice $2^{[5]}$. Under the correspondence
\[
F\subseteq [5]\quad \longleftrightarrow \quad x_F=\prod_{i\in F}x_i,
\]
the minimal generators of a squarefree monomial ideal form a collection of pairwise incomparable subsets of $[5]$ and every such antichain determines a squarefree monomial ideal. The symmetric group $S_5$ acts on these antichains
by relabeling variables. If two squarefree monomial ideals lie in the same $S_5$-orbit, then their quotient rings are isomorphic and hence have the same graded Betti table.

\begin{proposition}\label{computed-betti-tables}
Up to relabeling of variables, there are 208 nonzero proper squarefree monomial ideals in $k[x_1,\ldots,x_5]$. Among the quotient rings $S/I$ arising from these ideals, exactly 134 distinct graded Betti tables occur.
\end{proposition}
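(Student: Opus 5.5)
The plan is a finite computation organized so that every step can be checked. First, squarefree monomial ideals in $S=k[x_1,\ldots,x_5]$ correspond bijectively to antichains in the Boolean lattice $2^{[5]}$. There are $7581$ such antichains (the Dedekind number $M(5)$). We discard two of them:
\begin{itemize}
\item the empty antichain, which gives the zero ideal;
\item the antichain $\{\varnothing\}$, which gives the unit ideal $S$.
\end{itemize}
That leaves $7579$ antichains, each giving a nonzero proper squarefree ideal. Next we count $S_5$-orbits of these antichains. The plan is to do this in two independent ways and check that they agree:
\begin{itemize}
\item compute canonical forms by brute force, minimizing the sorted generator list over all $120$ permutations;
\item apply Burnside's lemma, averaging over the seven cycle types of $S_5$ the number of antichains fixed by a representative permutation.
\end{itemize}
The known count of inequivalent antichains on a $5$-set, i.e.\ monotone Boolean functions of five variables up to permutation, is $210$. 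Removing the two excluded orbits gives $208$.

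Second, for each orbit representative $I$ we compute the graded Betti table of $S/I$. By Theorem~\ref{squarefreefivevariabletheorem} the table does not depend on $\operatorname{char}(k)$, so the answer is well defined, and we may compute over $\mathbb Q$ or over $\mathbb F_2$. To make the computation self-contained rather than reliant on a resolution package, we would use Hochster's formula (Theorem~\ref{Hochstersformula}). Let $\Delta$ be the Stanley--Reisner complex of $I$. For each of the $32$ subsets $W\subseteq[5]$, form the induced subcomplex $\Delta_W$ and compute the ranks of the boundary matrices of its simplicial chain complex, over $\mathbb F_2$ say. This gives $\dim\widetilde H_q(\Delta_W)$ for every $q$. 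Then sum
\[
\beta_{i,j}^S(S/I)=\sum_{|W|=j}\dim_k\widetilde H_{j-i-1}(\Delta_W;k).
\]
As an independent check, we would recompute a sample of the tables, or all of them, with Macaulay2 in two characteristics.

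Third, we collect the $208$ tables, encode each as the finite matrix $(\beta_{i,j})$, and count how many distinct matrices occur. The expected result is $134$. The distinctness test is purely syntactic, so it is straightforward once the tables are in hand. It is also worth recording which orbits collide; complementary-type or cone constructions explain some coincidences, since adding a variable that does not appear in any generator leaves the table unchanged.

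The main obstacle is reliability, since the statement is a computer verification rather than a conceptual argument. The orbit count $208$ is the delicate point. I would guard it with the Burnside cross-check described above and with agreement with the published value $210$ for inequivalent monotone functions. The count $134$ depends on every table being correct, so I would compute each table by two independent routes, Hochster's formula and a direct minimal free resolution, and confirm that they agree in all $208$ cases before reporting the number.
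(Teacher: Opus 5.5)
Your proposal is correct and follows essentially the same route as the paper. The $7581$ antichains of $2^{[5]}$ fall into $210$ orbits under $S_5$, and the zero and unit ideals are two singleton orbits, leaving $208$. Each Betti table is computed in a single characteristic, which is legitimate by Theorem~\ref{squarefreefivevariabletheorem}, with Hochster's formula cross-checked against Macaulay2 before the $134$ distinct tables are counted. Your Burnside check of the orbit count is a sensible extra safeguard that the paper does not use.

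One side remark is loose. All $208$ ideals live in the same ring $k[x_1,\ldots,x_5]$, so the fact that adjoining an unused variable preserves the Betti table does not explain a coincidence between two of these orbits. Nothing in the count depends on that remark.
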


\begin{proof}
The total number of labeled antichains in $2^{[5]}$ is the Dedekind number 7581 \cite{oeisA000372}. Up to relabeling by $S_5$, there are 210 orbits of antichains \cite{oeisA003182}. These 210 orbits include the empty antichain, corresponding to the zero ideal (0), and the antichain $\{\varnothing\}$, corresponding to the unit ideal (1). Removing these two degenerate cases leaves 208 nonzero proper squarefree monomial ideals up to relabeling.

We generated these representatives computationally as follows. First, a Python script generated all antichains in $2^{[5]}$. Then, for each antichain, the
script applied all permutations in $S_5$ and kept a canonical representative from its relabeling orbit. This produced 210 orbit representatives, agreeing with the known count above. The complete list of these orbit representatives is given in Table~\ref{table:monomial-ideal-index} in Appendix~A. After removing the representatives of (0) and (1), 208 representatives remained.

For each representative $I$, we computed the graded Betti table of $S/I$ over the field $\mathbb F_{67}$. This choice of field is harmless by Theorem~\ref{squarefreefivevariabletheorem}, since the graded Betti numbers of squarefree monomial ideals in five variables are independent of the characteristic. The computation was checked in two independent ways: by computing minimal free resolutions in Macaulay2 \cite{M2} and by evaluating Hochster's formula in Python. Both implementations produced the same 134 distinct graded Betti tables.
\end{proof}

The multiplicities of the 134 graded Betti table types are as follows:
\[
\begin{array}{c|c|c}
\text{Ideals per Betti table} & \text{Number of Betti tables} &
\text{Ideals accounted for} \\
\hline
1 & 96 & 96\\
2 & 18 & 36\\
3 & 11 & 33\\
4 & 3 & 12\\
5 & 5 & 25\\
6 & 1 & 6\\
\hline
& 134 & 208
\end{array}
\]

Thus most graded Betti tables are realized by a unique relabeling class of squarefree monomial ideals, but coincidences do occur. Hochster's formula explains why this is possible: the graded Betti table records only the total dimensions of the reduced homology groups of induced subcomplexes of each fixed cardinality. It does not record which particular induced subcomplexes contribute those homology groups. Therefore, distinct Stanley--Reisner complexes can have the same graded Betti table.

\section{Veronese and Almost Veronese Ideals}

Indeed, some of the computed Betti table types can be explained by topology. Let 
\[
I_{d,n} = (x_F: |F|=d) \subseteq S=k[x_1,\ldots,x_n]
\]
be the squarefree Veronese ideal. Then the Stanley--Reisner complex of $I_{d,n}$ is 
\[
\Delta = \{F \subseteq [n]: |F|\leq d-1\},
\]
which is just the ($d-2$)-skeleton of the ($n-1$)-simplex. Therefore, if $|W|=r$, then $\Delta_W$ is the ($d-2$)-skeleton of an ($r-1$)-simplex. Its reduced homology is concentrated in degree ($d-2$), with rank $\binom{r-1}{d-1}$. Therefore, Hochster's formula gives
\[
\beta_{r-d+1,r}(S/I_{d,n}) = \binom{n}{r}\binom{r-1}{d-1} \quad d\leq r \leq n,
\]
with all other graded Betti numbers equal to zero, except for $\beta_{0,0}=1$. This agrees with the known characteristic-free behavior of squarefree Veronese ideals \cite{galetto2020squarefree}.

\begin{exam}
For the squarefree Veronese ideal $I_{3,5}$, the formula above gives
\[
\beta_{1,3}(S/I_{3,5})=\binom{5}{3}\binom{2}{2}=10,
\]
\[
\beta_{2,4}(S/I_{3,5})=\binom{5}{4}\binom{3}{2}=15,
\]
and
\[
\beta_{3,5}(S/I_{3,5})=\binom{5}{5}\binom{4}{2}=6.
\]
Together with $\beta_{0,0}=1$, these are the only nonzero graded Betti numbers of $S/I_{3,5}$. Thus the Betti table is
\[
\begin{array}{c|cccc}
 & 0 & 1 & 2 & 3\\
\hline
\mathrm{total} & 1 & 10 & 15 & 6\\
0 & 1 & \cdot & \cdot & \cdot\\
1 & \cdot & \cdot & \cdot & \cdot\\
2 & \cdot & 10 & 15 & 6
\end{array}
\]
This Betti table is one of the 134 table types appearing in the five-variable enumeration above.
\end{exam}

A similar explanation applies to the almost squarefree Veronese ideals. Following Jafari, Mafi, and Saremi \cite{jafari2020sequentially}, consider the ideal obtained by deleting one generator from $I_{d,n}$: fix a $d$-subset $F_0\subseteq[n]$, and let 
\[
I_{d,n}^{-} = (x_F: |F|=d,\ F\neq F_0)
\]
be the almost squarefree Veronese ideal. Topologically, the Stanley--Reisner complex of $I_{d,n}^{-}$ is obtained from the ($d-2$)-skeleton by adding the single ($d-1$)-simplex $F_0$. Therefore, for an induced subcomplex on a set $W$ with $|W|=r$, the usual rank $\binom{r-1}{d-1}$ of the top reduced homology of the $(d-2)$-skeleton is decreased by $1$ exactly when $F_0\subseteq W$. There are $\binom{n-d}{r-d}$ subsets $W\subseteq[n]$ of size $r$ containing $F_0$. Hochster's formula therefore gives, for $d\leq r\leq n$,
\[
\beta_{r-d+1,r}(S/I_{d,n}^{-})
=
\binom{n}{r}\binom{r-1}{d-1}
-
\binom{n-d}{r-d}.
\]
All other graded Betti numbers vanish, except for $\beta_{0,0}=1$.

These examples illustrate how some of the computed Betti tables can be explained by familiar topological structures. 

\section{Conclusion and Further Directions}
We proved that monomial ideals in $k[x_1,\ldots,x_5]$ have characteristic-independent multigraded, graded, and total Betti numbers. The proof reduces each multigraded Betti number to one for a squarefree twin ideal and then applies Hochster's formula together with the fact that simplicial complexes on at most five vertices have torsion-free reduced integral homology. The classical six-variable example arising from the six-vertex triangulation of $\mathbb{RP}^{2}$ shows that the result is sharp. Moreover, the six-vertex classification of Govc--Marzantowicz--Michalak--Pave{\v{s}}i{\'c} shows that in six variables the only possible characteristic dependence comes from the $\mathbb Z/2$-torsion of the minimal triangulation of $\mathbb{RP}^2$. Finally, our computation shows that the 208 nonzero proper squarefree monomial ideals in five variables up to relabeling realize 134 distinct graded Betti tables.

Several natural questions remain. First, can the 134 Betti table types be classified by the homology profiles of induced subcomplexes on five vertices? Hochster's formula shows that a graded Betti table records only the total reduced homology dimensions of induced subcomplexes of each cardinality, not which induced subcomplexes contribute. This loss of information explains why nonisomorphic Stanley--Reisner complexes can have the same graded Betti table.

Second, which structural families account for repeated Betti tables? The squarefree Veronese and almost squarefree Veronese ideals give one source of such families, governed by simplex-skeleton homology. It would be interesting to identify the other recurring topological patterns among five-vertex complexes.

Finally, for general $n$, how many distinct graded Betti tables occur among squarefree monomial ideals in $n$ variables up to relabeling? The case $n=5$ gives 134 table types among 208 nonzero proper relabeling classes, suggesting a broader enumerative problem.

\section*{Data and code availability}
The code used to generate the relabeling representatives and compute the graded Betti tables, together with the output files, is available in the accompanying repository: \url{https://github.com/voltroom0606/fivevariablebettinumbers}. AI tools were used to assist with code drafting, formatting, and debugging. The mathematical arguments and computational outputs were checked by the authors, including agreement between the Macaulay2 minimal-resolution computation and an independent Python implementation of Hochster's formula.

\section*{Acknowledgements}
This work was supported by NSF grant DMS-2244020. The authors are grateful to Professor Guillermo Alesandroni for guidance, encouragement, and helpful comments on earlier versions of this manuscript. The authors also thank Christopher Chin, Abigail Ho, and Patrick Welte for useful discussions, and thank the California State University Chico Mathematics and Statistics Department, in particular John Lind, for providing the setting in which this work was carried out. 

\appendix

\section{The 210 relabeling classes of squarefree monomial ideals in five variables}
\label{listofmonomialidealsfivevar}

{\scriptsize
\begin{longtable}{@{}r>{\raggedright\arraybackslash}p{0.86\textwidth}@{}}
\caption{Table of squarefree monomial ideals with 5 variables up to relabeling. }\label{table:monomial-ideal-index}\\
\toprule
Index & Ideal \\
\midrule
\endfirsthead
\toprule
Index & Ideal \\
\midrule
\endhead
\midrule
\multicolumn{2}{r}{\emph{Continued on next page}}\\
\midrule
\endfoot
\bottomrule
\endlastfoot
1 & \(\bigl(0\bigr)\) \\
2 & \(\bigl(1\bigr)\) \\
3 & \(\bigl(x_{1}\bigr)\) \\
4 & \(\bigl(x_{1}x_{2}\bigr)\) \\
5 & \(\bigl(x_{1}x_{2}x_{3}\bigr)\) \\
6 & \(\bigl(x_{1}x_{2}x_{3}x_{4}\bigr)\) \\
7 & \(\bigl(x_{1}x_{2}x_{3}x_{4}x_{5}\bigr)\) \\
8 & \(\bigl(x_{1},\allowbreak x_{2}\bigr)\) \\
9 & \(\bigl(x_{1},\allowbreak x_{2}x_{3}\bigr)\) \\
10 & \(\bigl(x_{1},\allowbreak x_{2}x_{3}x_{4}\bigr)\) \\
11 & \(\bigl(x_{1},\allowbreak x_{2}x_{3}x_{4}x_{5}\bigr)\) \\
12 & \(\bigl(x_{1}x_{2},\allowbreak x_{1}x_{3}\bigr)\) \\
13 & \(\bigl(x_{1}x_{2},\allowbreak x_{3}x_{4}\bigr)\) \\
14 & \(\bigl(x_{1}x_{2},\allowbreak x_{1}x_{3}x_{4}\bigr)\) \\
15 & \(\bigl(x_{1}x_{2},\allowbreak x_{3}x_{4}x_{5}\bigr)\) \\
16 & \(\bigl(x_{1}x_{2},\allowbreak x_{1}x_{3}x_{4}x_{5}\bigr)\) \\
17 & \(\bigl(x_{1}x_{2}x_{3},\allowbreak x_{1}x_{2}x_{4}\bigr)\) \\
18 & \(\bigl(x_{1}x_{2}x_{3},\allowbreak x_{1}x_{4}x_{5}\bigr)\) \\
19 & \(\bigl(x_{1}x_{2}x_{3},\allowbreak x_{1}x_{2}x_{4}x_{5}\bigr)\) \\
20 & \(\bigl(x_{1}x_{2}x_{3}x_{4},\allowbreak x_{1}x_{2}x_{3}x_{5}\bigr)\) \\
21 & \(\bigl(x_{1},\allowbreak x_{2},\allowbreak x_{3}\bigr)\) \\
22 & \(\bigl(x_{1},\allowbreak x_{2},\allowbreak x_{3}x_{4}\bigr)\) \\
23 & \(\bigl(x_{1},\allowbreak x_{2},\allowbreak x_{3}x_{4}x_{5}\bigr)\) \\
24 & \(\bigl(x_{1},\allowbreak x_{2}x_{3},\allowbreak x_{2}x_{4}\bigr)\) \\
25 & \(\bigl(x_{1},\allowbreak x_{2}x_{3},\allowbreak x_{4}x_{5}\bigr)\) \\
26 & \(\bigl(x_{1},\allowbreak x_{2}x_{3},\allowbreak x_{2}x_{4}x_{5}\bigr)\) \\
27 & \(\bigl(x_{1},\allowbreak x_{2}x_{3}x_{4},\allowbreak x_{2}x_{3}x_{5}\bigr)\) \\
28 & \(\bigl(x_{1}x_{2},\allowbreak x_{1}x_{3},\allowbreak x_{2}x_{3}\bigr)\) \\
29 & \(\bigl(x_{1}x_{2},\allowbreak x_{1}x_{3},\allowbreak x_{1}x_{4}\bigr)\) \\
30 & \(\bigl(x_{1}x_{2},\allowbreak x_{1}x_{3},\allowbreak x_{2}x_{4}\bigr)\) \\
31 & \(\bigl(x_{1}x_{2},\allowbreak x_{1}x_{3},\allowbreak x_{2}x_{3}x_{4}\bigr)\) \\
32 & \(\bigl(x_{1}x_{2},\allowbreak x_{1}x_{3},\allowbreak x_{4}x_{5}\bigr)\) \\
33 & \(\bigl(x_{1}x_{2},\allowbreak x_{1}x_{3},\allowbreak x_{1}x_{4}x_{5}\bigr)\) \\
34 & \(\bigl(x_{1}x_{2},\allowbreak x_{1}x_{3},\allowbreak x_{2}x_{4}x_{5}\bigr)\) \\
35 & \(\bigl(x_{1}x_{2},\allowbreak x_{1}x_{3},\allowbreak x_{2}x_{3}x_{4}x_{5}\bigr)\) \\
36 & \(\bigl(x_{1}x_{2},\allowbreak x_{3}x_{4},\allowbreak x_{1}x_{3}x_{5}\bigr)\) \\
37 & \(\bigl(x_{1}x_{2},\allowbreak x_{1}x_{3}x_{4},\allowbreak x_{2}x_{3}x_{4}\bigr)\) \\
38 & \(\bigl(x_{1}x_{2},\allowbreak x_{1}x_{3}x_{4},\allowbreak x_{1}x_{3}x_{5}\bigr)\) \\
39 & \(\bigl(x_{1}x_{2},\allowbreak x_{1}x_{3}x_{4},\allowbreak x_{2}x_{3}x_{5}\bigr)\) \\
40 & \(\bigl(x_{1}x_{2},\allowbreak x_{1}x_{3}x_{4},\allowbreak x_{3}x_{4}x_{5}\bigr)\) \\
41 & \(\bigl(x_{1}x_{2},\allowbreak x_{1}x_{3}x_{4},\allowbreak x_{2}x_{3}x_{4}x_{5}\bigr)\) \\
42 & \(\bigl(x_{1}x_{2},\allowbreak x_{1}x_{3}x_{4}x_{5},\allowbreak x_{2}x_{3}x_{4}x_{5}\bigr)\) \\
43 & \(\bigl(x_{1}x_{2}x_{3},\allowbreak x_{1}x_{2}x_{4},\allowbreak x_{1}x_{3}x_{4}\bigr)\) \\
44 & \(\bigl(x_{1}x_{2}x_{3},\allowbreak x_{1}x_{2}x_{4},\allowbreak x_{1}x_{2}x_{5}\bigr)\) \\
45 & \(\bigl(x_{1}x_{2}x_{3},\allowbreak x_{1}x_{2}x_{4},\allowbreak x_{1}x_{3}x_{5}\bigr)\) \\
46 & \(\bigl(x_{1}x_{2}x_{3},\allowbreak x_{1}x_{2}x_{4},\allowbreak x_{3}x_{4}x_{5}\bigr)\) \\
47 & \(\bigl(x_{1}x_{2}x_{3},\allowbreak x_{1}x_{2}x_{4},\allowbreak x_{1}x_{3}x_{4}x_{5}\bigr)\) \\
48 & \(\bigl(x_{1}x_{2}x_{3},\allowbreak x_{1}x_{4}x_{5},\allowbreak x_{2}x_{3}x_{4}x_{5}\bigr)\) \\
49 & \(\bigl(x_{1}x_{2}x_{3},\allowbreak x_{1}x_{2}x_{4}x_{5},\allowbreak x_{1}x_{3}x_{4}x_{5}\bigr)\) \\
50 & \(\bigl(x_{1}x_{2}x_{3}x_{4},\allowbreak x_{1}x_{2}x_{3}x_{5},\allowbreak x_{1}x_{2}x_{4}x_{5}\bigr)\) \\
51 & \(\bigl(x_{1},\allowbreak x_{2},\allowbreak x_{3},\allowbreak x_{4}\bigr)\) \\
52 & \(\bigl(x_{1},\allowbreak x_{2},\allowbreak x_{3},\allowbreak x_{4}x_{5}\bigr)\) \\
53 & \(\bigl(x_{1},\allowbreak x_{2},\allowbreak x_{3}x_{4},\allowbreak x_{3}x_{5}\bigr)\) \\
54 & \(\bigl(x_{1},\allowbreak x_{2}x_{3},\allowbreak x_{2}x_{4},\allowbreak x_{3}x_{4}\bigr)\) \\
55 & \(\bigl(x_{1},\allowbreak x_{2}x_{3},\allowbreak x_{2}x_{4},\allowbreak x_{2}x_{5}\bigr)\) \\
56 & \(\bigl(x_{1},\allowbreak x_{2}x_{3},\allowbreak x_{2}x_{4},\allowbreak x_{3}x_{5}\bigr)\) \\
57 & \(\bigl(x_{1},\allowbreak x_{2}x_{3},\allowbreak x_{2}x_{4},\allowbreak x_{3}x_{4}x_{5}\bigr)\) \\
58 & \(\bigl(x_{1},\allowbreak x_{2}x_{3},\allowbreak x_{2}x_{4}x_{5},\allowbreak x_{3}x_{4}x_{5}\bigr)\) \\
59 & \(\bigl(x_{1},\allowbreak x_{2}x_{3}x_{4},\allowbreak x_{2}x_{3}x_{5},\allowbreak x_{2}x_{4}x_{5}\bigr)\) \\
60 & \(\bigl(x_{1}x_{2},\allowbreak x_{1}x_{3},\allowbreak x_{2}x_{3},\allowbreak x_{1}x_{4}\bigr)\) \\
61 & \(\bigl(x_{1}x_{2},\allowbreak x_{1}x_{3},\allowbreak x_{2}x_{3},\allowbreak x_{4}x_{5}\bigr)\) \\
62 & \(\bigl(x_{1}x_{2},\allowbreak x_{1}x_{3},\allowbreak x_{2}x_{3},\allowbreak x_{1}x_{4}x_{5}\bigr)\) \\
63 & \(\bigl(x_{1}x_{2},\allowbreak x_{1}x_{3},\allowbreak x_{1}x_{4},\allowbreak x_{2}x_{3}x_{4}\bigr)\) \\
64 & \(\bigl(x_{1}x_{2},\allowbreak x_{1}x_{3},\allowbreak x_{1}x_{4},\allowbreak x_{1}x_{5}\bigr)\) \\
65 & \(\bigl(x_{1}x_{2},\allowbreak x_{1}x_{3},\allowbreak x_{1}x_{4},\allowbreak x_{2}x_{5}\bigr)\) \\
66 & \(\bigl(x_{1}x_{2},\allowbreak x_{1}x_{3},\allowbreak x_{1}x_{4},\allowbreak x_{2}x_{3}x_{5}\bigr)\) \\
67 & \(\bigl(x_{1}x_{2},\allowbreak x_{1}x_{3},\allowbreak x_{1}x_{4},\allowbreak x_{2}x_{3}x_{4}x_{5}\bigr)\) \\
68 & \(\bigl(x_{1}x_{2},\allowbreak x_{1}x_{3},\allowbreak x_{2}x_{4},\allowbreak x_{3}x_{4}\bigr)\) \\
69 & \(\bigl(x_{1}x_{2},\allowbreak x_{1}x_{3},\allowbreak x_{2}x_{4},\allowbreak x_{3}x_{5}\bigr)\) \\
70 & \(\bigl(x_{1}x_{2},\allowbreak x_{1}x_{3},\allowbreak x_{2}x_{4},\allowbreak x_{2}x_{3}x_{5}\bigr)\) \\
71 & \(\bigl(x_{1}x_{2},\allowbreak x_{1}x_{3},\allowbreak x_{2}x_{4},\allowbreak x_{3}x_{4}x_{5}\bigr)\) \\
72 & \(\bigl(x_{1}x_{2},\allowbreak x_{1}x_{3},\allowbreak x_{2}x_{3}x_{4},\allowbreak x_{2}x_{3}x_{5}\bigr)\) \\
73 & \(\bigl(x_{1}x_{2},\allowbreak x_{1}x_{3},\allowbreak x_{2}x_{3}x_{4},\allowbreak x_{4}x_{5}\bigr)\) \\
74 & \(\bigl(x_{1}x_{2},\allowbreak x_{1}x_{3},\allowbreak x_{2}x_{3}x_{4},\allowbreak x_{1}x_{4}x_{5}\bigr)\) \\
75 & \(\bigl(x_{1}x_{2},\allowbreak x_{1}x_{3},\allowbreak x_{2}x_{3}x_{4},\allowbreak x_{2}x_{4}x_{5}\bigr)\) \\
76 & \(\bigl(x_{1}x_{2},\allowbreak x_{1}x_{3},\allowbreak x_{1}x_{4}x_{5},\allowbreak x_{2}x_{4}x_{5}\bigr)\) \\
77 & \(\bigl(x_{1}x_{2},\allowbreak x_{1}x_{3},\allowbreak x_{1}x_{4}x_{5},\allowbreak x_{2}x_{3}x_{4}x_{5}\bigr)\) \\
78 & \(\bigl(x_{1}x_{2},\allowbreak x_{1}x_{3},\allowbreak x_{2}x_{4}x_{5},\allowbreak x_{3}x_{4}x_{5}\bigr)\) \\
79 & \(\bigl(x_{1}x_{2},\allowbreak x_{3}x_{4},\allowbreak x_{1}x_{3}x_{5},\allowbreak x_{2}x_{3}x_{5}\bigr)\) \\
80 & \(\bigl(x_{1}x_{2},\allowbreak x_{3}x_{4},\allowbreak x_{1}x_{3}x_{5},\allowbreak x_{2}x_{4}x_{5}\bigr)\) \\
81 & \(\bigl(x_{1}x_{2},\allowbreak x_{1}x_{3}x_{4},\allowbreak x_{2}x_{3}x_{4},\allowbreak x_{1}x_{3}x_{5}\bigr)\) \\
82 & \(\bigl(x_{1}x_{2},\allowbreak x_{1}x_{3}x_{4},\allowbreak x_{2}x_{3}x_{4},\allowbreak x_{3}x_{4}x_{5}\bigr)\) \\
83 & \(\bigl(x_{1}x_{2},\allowbreak x_{1}x_{3}x_{4},\allowbreak x_{1}x_{3}x_{5},\allowbreak x_{1}x_{4}x_{5}\bigr)\) \\
84 & \(\bigl(x_{1}x_{2},\allowbreak x_{1}x_{3}x_{4},\allowbreak x_{1}x_{3}x_{5},\allowbreak x_{2}x_{4}x_{5}\bigr)\) \\
85 & \(\bigl(x_{1}x_{2},\allowbreak x_{1}x_{3}x_{4},\allowbreak x_{1}x_{3}x_{5},\allowbreak x_{3}x_{4}x_{5}\bigr)\) \\
86 & \(\bigl(x_{1}x_{2},\allowbreak x_{1}x_{3}x_{4},\allowbreak x_{1}x_{3}x_{5},\allowbreak x_{2}x_{3}x_{4}x_{5}\bigr)\) \\
87 & \(\bigl(x_{1}x_{2},\allowbreak x_{1}x_{3}x_{4},\allowbreak x_{2}x_{3}x_{5},\allowbreak x_{3}x_{4}x_{5}\bigr)\) \\
88 & \(\bigl(x_{1}x_{2}x_{3},\allowbreak x_{1}x_{2}x_{4},\allowbreak x_{1}x_{3}x_{4},\allowbreak x_{2}x_{3}x_{4}\bigr)\) \\
89 & \(\bigl(x_{1}x_{2}x_{3},\allowbreak x_{1}x_{2}x_{4},\allowbreak x_{1}x_{3}x_{4},\allowbreak x_{1}x_{2}x_{5}\bigr)\) \\
90 & \(\bigl(x_{1}x_{2}x_{3},\allowbreak x_{1}x_{2}x_{4},\allowbreak x_{1}x_{3}x_{4},\allowbreak x_{2}x_{3}x_{5}\bigr)\) \\
91 & \(\bigl(x_{1}x_{2}x_{3},\allowbreak x_{1}x_{2}x_{4},\allowbreak x_{1}x_{3}x_{4},\allowbreak x_{2}x_{3}x_{4}x_{5}\bigr)\) \\
92 & \(\bigl(x_{1}x_{2}x_{3},\allowbreak x_{1}x_{2}x_{4},\allowbreak x_{1}x_{2}x_{5},\allowbreak x_{3}x_{4}x_{5}\bigr)\) \\
93 & \(\bigl(x_{1}x_{2}x_{3},\allowbreak x_{1}x_{2}x_{4},\allowbreak x_{1}x_{2}x_{5},\allowbreak x_{1}x_{3}x_{4}x_{5}\bigr)\) \\
94 & \(\bigl(x_{1}x_{2}x_{3},\allowbreak x_{1}x_{2}x_{4},\allowbreak x_{1}x_{3}x_{5},\allowbreak x_{1}x_{4}x_{5}\bigr)\) \\
95 & \(\bigl(x_{1}x_{2}x_{3},\allowbreak x_{1}x_{2}x_{4},\allowbreak x_{1}x_{3}x_{5},\allowbreak x_{2}x_{4}x_{5}\bigr)\) \\
96 & \(\bigl(x_{1}x_{2}x_{3},\allowbreak x_{1}x_{2}x_{4},\allowbreak x_{1}x_{3}x_{5},\allowbreak x_{2}x_{3}x_{4}x_{5}\bigr)\) \\
97 & \(\bigl(x_{1}x_{2}x_{3},\allowbreak x_{1}x_{2}x_{4},\allowbreak x_{1}x_{3}x_{4}x_{5},\allowbreak x_{2}x_{3}x_{4}x_{5}\bigr)\) \\
98 & \(\bigl(x_{1}x_{2}x_{3},\allowbreak x_{1}x_{2}x_{4}x_{5},\allowbreak x_{1}x_{3}x_{4}x_{5},\allowbreak x_{2}x_{3}x_{4}x_{5}\bigr)\) \\
99 & \(\bigl(x_{1}x_{2}x_{3}x_{4},\allowbreak x_{1}x_{2}x_{3}x_{5},\allowbreak x_{1}x_{2}x_{4}x_{5},\allowbreak x_{1}x_{3}x_{4}x_{5}\bigr)\) \\
100 & \(\bigl(x_{1},\allowbreak x_{2},\allowbreak x_{3},\allowbreak x_{4},\allowbreak x_{5}\bigr)\) \\
101 & \(\bigl(x_{1},\allowbreak x_{2},\allowbreak x_{3}x_{4},\allowbreak x_{3}x_{5},\allowbreak x_{4}x_{5}\bigr)\) \\
102 & \(\bigl(x_{1},\allowbreak x_{2}x_{3},\allowbreak x_{2}x_{4},\allowbreak x_{3}x_{4},\allowbreak x_{2}x_{5}\bigr)\) \\
103 & \(\bigl(x_{1},\allowbreak x_{2}x_{3},\allowbreak x_{2}x_{4},\allowbreak x_{2}x_{5},\allowbreak x_{3}x_{4}x_{5}\bigr)\) \\
104 & \(\bigl(x_{1},\allowbreak x_{2}x_{3},\allowbreak x_{2}x_{4},\allowbreak x_{3}x_{5},\allowbreak x_{4}x_{5}\bigr)\) \\
105 & \(\bigl(x_{1},\allowbreak x_{2}x_{3}x_{4},\allowbreak x_{2}x_{3}x_{5},\allowbreak x_{2}x_{4}x_{5},\allowbreak x_{3}x_{4}x_{5}\bigr)\) \\
106 & \(\bigl(x_{1}x_{2},\allowbreak x_{1}x_{3},\allowbreak x_{2}x_{3},\allowbreak x_{1}x_{4},\allowbreak x_{2}x_{4}\bigr)\) \\
107 & \(\bigl(x_{1}x_{2},\allowbreak x_{1}x_{3},\allowbreak x_{2}x_{3},\allowbreak x_{1}x_{4},\allowbreak x_{1}x_{5}\bigr)\) \\
108 & \(\bigl(x_{1}x_{2},\allowbreak x_{1}x_{3},\allowbreak x_{2}x_{3},\allowbreak x_{1}x_{4},\allowbreak x_{2}x_{5}\bigr)\) \\
109 & \(\bigl(x_{1}x_{2},\allowbreak x_{1}x_{3},\allowbreak x_{2}x_{3},\allowbreak x_{1}x_{4},\allowbreak x_{4}x_{5}\bigr)\) \\
110 & \(\bigl(x_{1}x_{2},\allowbreak x_{1}x_{3},\allowbreak x_{2}x_{3},\allowbreak x_{1}x_{4},\allowbreak x_{2}x_{4}x_{5}\bigr)\) \\
111 & \(\bigl(x_{1}x_{2},\allowbreak x_{1}x_{3},\allowbreak x_{2}x_{3},\allowbreak x_{1}x_{4}x_{5},\allowbreak x_{2}x_{4}x_{5}\bigr)\) \\
112 & \(\bigl(x_{1}x_{2},\allowbreak x_{1}x_{3},\allowbreak x_{1}x_{4},\allowbreak x_{2}x_{3}x_{4},\allowbreak x_{1}x_{5}\bigr)\) \\
113 & \(\bigl(x_{1}x_{2},\allowbreak x_{1}x_{3},\allowbreak x_{1}x_{4},\allowbreak x_{2}x_{3}x_{4},\allowbreak x_{2}x_{5}\bigr)\) \\
114 & \(\bigl(x_{1}x_{2},\allowbreak x_{1}x_{3},\allowbreak x_{1}x_{4},\allowbreak x_{2}x_{3}x_{4},\allowbreak x_{2}x_{3}x_{5}\bigr)\) \\
115 & \(\bigl(x_{1}x_{2},\allowbreak x_{1}x_{3},\allowbreak x_{1}x_{4},\allowbreak x_{1}x_{5},\allowbreak x_{2}x_{3}x_{4}x_{5}\bigr)\) \\
116 & \(\bigl(x_{1}x_{2},\allowbreak x_{1}x_{3},\allowbreak x_{1}x_{4},\allowbreak x_{2}x_{5},\allowbreak x_{3}x_{5}\bigr)\) \\
117 & \(\bigl(x_{1}x_{2},\allowbreak x_{1}x_{3},\allowbreak x_{1}x_{4},\allowbreak x_{2}x_{5},\allowbreak x_{3}x_{4}x_{5}\bigr)\) \\
118 & \(\bigl(x_{1}x_{2},\allowbreak x_{1}x_{3},\allowbreak x_{1}x_{4},\allowbreak x_{2}x_{3}x_{5},\allowbreak x_{2}x_{4}x_{5}\bigr)\) \\
119 & \(\bigl(x_{1}x_{2},\allowbreak x_{1}x_{3},\allowbreak x_{2}x_{4},\allowbreak x_{3}x_{4},\allowbreak x_{2}x_{3}x_{5}\bigr)\) \\
120 & \(\bigl(x_{1}x_{2},\allowbreak x_{1}x_{3},\allowbreak x_{2}x_{4},\allowbreak x_{3}x_{5},\allowbreak x_{4}x_{5}\bigr)\) \\
121 & \(\bigl(x_{1}x_{2},\allowbreak x_{1}x_{3},\allowbreak x_{2}x_{4},\allowbreak x_{3}x_{5},\allowbreak x_{1}x_{4}x_{5}\bigr)\) \\
122 & \(\bigl(x_{1}x_{2},\allowbreak x_{1}x_{3},\allowbreak x_{2}x_{4},\allowbreak x_{2}x_{3}x_{5},\allowbreak x_{1}x_{4}x_{5}\bigr)\) \\
123 & \(\bigl(x_{1}x_{2},\allowbreak x_{1}x_{3},\allowbreak x_{2}x_{4},\allowbreak x_{2}x_{3}x_{5},\allowbreak x_{3}x_{4}x_{5}\bigr)\) \\
124 & \(\bigl(x_{1}x_{2},\allowbreak x_{1}x_{3},\allowbreak x_{2}x_{3}x_{4},\allowbreak x_{2}x_{3}x_{5},\allowbreak x_{4}x_{5}\bigr)\) \\
125 & \(\bigl(x_{1}x_{2},\allowbreak x_{1}x_{3},\allowbreak x_{2}x_{3}x_{4},\allowbreak x_{2}x_{3}x_{5},\allowbreak x_{1}x_{4}x_{5}\bigr)\) \\
126 & \(\bigl(x_{1}x_{2},\allowbreak x_{1}x_{3},\allowbreak x_{2}x_{3}x_{4},\allowbreak x_{2}x_{3}x_{5},\allowbreak x_{2}x_{4}x_{5}\bigr)\) \\
127 & \(\bigl(x_{1}x_{2},\allowbreak x_{1}x_{3},\allowbreak x_{2}x_{3}x_{4},\allowbreak x_{1}x_{4}x_{5},\allowbreak x_{2}x_{4}x_{5}\bigr)\) \\
128 & \(\bigl(x_{1}x_{2},\allowbreak x_{1}x_{3},\allowbreak x_{2}x_{3}x_{4},\allowbreak x_{2}x_{4}x_{5},\allowbreak x_{3}x_{4}x_{5}\bigr)\) \\
129 & \(\bigl(x_{1}x_{2},\allowbreak x_{1}x_{3},\allowbreak x_{1}x_{4}x_{5},\allowbreak x_{2}x_{4}x_{5},\allowbreak x_{3}x_{4}x_{5}\bigr)\) \\
130 & \(\bigl(x_{1}x_{2},\allowbreak x_{3}x_{4},\allowbreak x_{1}x_{3}x_{5},\allowbreak x_{2}x_{3}x_{5},\allowbreak x_{1}x_{4}x_{5}\bigr)\) \\
131 & \(\bigl(x_{1}x_{2},\allowbreak x_{1}x_{3}x_{4},\allowbreak x_{2}x_{3}x_{4},\allowbreak x_{1}x_{3}x_{5},\allowbreak x_{2}x_{3}x_{5}\bigr)\) \\
132 & \(\bigl(x_{1}x_{2},\allowbreak x_{1}x_{3}x_{4},\allowbreak x_{2}x_{3}x_{4},\allowbreak x_{1}x_{3}x_{5},\allowbreak x_{1}x_{4}x_{5}\bigr)\) \\
133 & \(\bigl(x_{1}x_{2},\allowbreak x_{1}x_{3}x_{4},\allowbreak x_{2}x_{3}x_{4},\allowbreak x_{1}x_{3}x_{5},\allowbreak x_{2}x_{4}x_{5}\bigr)\) \\
134 & \(\bigl(x_{1}x_{2},\allowbreak x_{1}x_{3}x_{4},\allowbreak x_{2}x_{3}x_{4},\allowbreak x_{1}x_{3}x_{5},\allowbreak x_{3}x_{4}x_{5}\bigr)\) \\
135 & \(\bigl(x_{1}x_{2},\allowbreak x_{1}x_{3}x_{4},\allowbreak x_{1}x_{3}x_{5},\allowbreak x_{1}x_{4}x_{5},\allowbreak x_{3}x_{4}x_{5}\bigr)\) \\
136 & \(\bigl(x_{1}x_{2},\allowbreak x_{1}x_{3}x_{4},\allowbreak x_{1}x_{3}x_{5},\allowbreak x_{1}x_{4}x_{5},\allowbreak x_{2}x_{3}x_{4}x_{5}\bigr)\) \\
137 & \(\bigl(x_{1}x_{2},\allowbreak x_{1}x_{3}x_{4},\allowbreak x_{1}x_{3}x_{5},\allowbreak x_{2}x_{4}x_{5},\allowbreak x_{3}x_{4}x_{5}\bigr)\) \\
138 & \(\bigl(x_{1}x_{2}x_{3},\allowbreak x_{1}x_{2}x_{4},\allowbreak x_{1}x_{3}x_{4},\allowbreak x_{2}x_{3}x_{4},\allowbreak x_{1}x_{2}x_{5}\bigr)\) \\
139 & \(\bigl(x_{1}x_{2}x_{3},\allowbreak x_{1}x_{2}x_{4},\allowbreak x_{1}x_{3}x_{4},\allowbreak x_{1}x_{2}x_{5},\allowbreak x_{1}x_{3}x_{5}\bigr)\) \\
140 & \(\bigl(x_{1}x_{2}x_{3},\allowbreak x_{1}x_{2}x_{4},\allowbreak x_{1}x_{3}x_{4},\allowbreak x_{1}x_{2}x_{5},\allowbreak x_{2}x_{3}x_{5}\bigr)\) \\
141 & \(\bigl(x_{1}x_{2}x_{3},\allowbreak x_{1}x_{2}x_{4},\allowbreak x_{1}x_{3}x_{4},\allowbreak x_{1}x_{2}x_{5},\allowbreak x_{3}x_{4}x_{5}\bigr)\) \\
142 & \(\bigl(x_{1}x_{2}x_{3},\allowbreak x_{1}x_{2}x_{4},\allowbreak x_{1}x_{3}x_{4},\allowbreak x_{1}x_{2}x_{5},\allowbreak x_{2}x_{3}x_{4}x_{5}\bigr)\) \\
143 & \(\bigl(x_{1}x_{2}x_{3},\allowbreak x_{1}x_{2}x_{4},\allowbreak x_{1}x_{3}x_{4},\allowbreak x_{2}x_{3}x_{5},\allowbreak x_{2}x_{4}x_{5}\bigr)\) \\
144 & \(\bigl(x_{1}x_{2}x_{3},\allowbreak x_{1}x_{2}x_{4},\allowbreak x_{1}x_{2}x_{5},\allowbreak x_{1}x_{3}x_{4}x_{5},\allowbreak x_{2}x_{3}x_{4}x_{5}\bigr)\) \\
145 & \(\bigl(x_{1}x_{2}x_{3},\allowbreak x_{1}x_{2}x_{4},\allowbreak x_{1}x_{3}x_{5},\allowbreak x_{1}x_{4}x_{5},\allowbreak x_{2}x_{3}x_{4}x_{5}\bigr)\) \\
146 & \(\bigl(x_{1}x_{2}x_{3},\allowbreak x_{1}x_{2}x_{4},\allowbreak x_{1}x_{3}x_{5},\allowbreak x_{2}x_{4}x_{5},\allowbreak x_{3}x_{4}x_{5}\bigr)\) \\
147 & \(\bigl(x_{1}x_{2}x_{3}x_{4},\allowbreak x_{1}x_{2}x_{3}x_{5},\allowbreak x_{1}x_{2}x_{4}x_{5},\allowbreak x_{1}x_{3}x_{4}x_{5},\allowbreak x_{2}x_{3}x_{4}x_{5}\bigr)\) \\
148 & \(\bigl(x_{1},\allowbreak x_{2}x_{3},\allowbreak x_{2}x_{4},\allowbreak x_{3}x_{4},\allowbreak x_{2}x_{5},\allowbreak x_{3}x_{5}\bigr)\) \\
149 & \(\bigl(x_{1}x_{2},\allowbreak x_{1}x_{3},\allowbreak x_{2}x_{3},\allowbreak x_{1}x_{4},\allowbreak x_{2}x_{4},\allowbreak x_{3}x_{4}\bigr)\) \\
150 & \(\bigl(x_{1}x_{2},\allowbreak x_{1}x_{3},\allowbreak x_{2}x_{3},\allowbreak x_{1}x_{4},\allowbreak x_{2}x_{4},\allowbreak x_{1}x_{5}\bigr)\) \\
151 & \(\bigl(x_{1}x_{2},\allowbreak x_{1}x_{3},\allowbreak x_{2}x_{3},\allowbreak x_{1}x_{4},\allowbreak x_{2}x_{4},\allowbreak x_{3}x_{5}\bigr)\) \\
152 & \(\bigl(x_{1}x_{2},\allowbreak x_{1}x_{3},\allowbreak x_{2}x_{3},\allowbreak x_{1}x_{4},\allowbreak x_{2}x_{4},\allowbreak x_{3}x_{4}x_{5}\bigr)\) \\
153 & \(\bigl(x_{1}x_{2},\allowbreak x_{1}x_{3},\allowbreak x_{2}x_{3},\allowbreak x_{1}x_{4},\allowbreak x_{1}x_{5},\allowbreak x_{4}x_{5}\bigr)\) \\
154 & \(\bigl(x_{1}x_{2},\allowbreak x_{1}x_{3},\allowbreak x_{2}x_{3},\allowbreak x_{1}x_{4},\allowbreak x_{1}x_{5},\allowbreak x_{2}x_{4}x_{5}\bigr)\) \\
155 & \(\bigl(x_{1}x_{2},\allowbreak x_{1}x_{3},\allowbreak x_{2}x_{3},\allowbreak x_{1}x_{4},\allowbreak x_{2}x_{5},\allowbreak x_{4}x_{5}\bigr)\) \\
156 & \(\bigl(x_{1}x_{2},\allowbreak x_{1}x_{3},\allowbreak x_{2}x_{3},\allowbreak x_{1}x_{4},\allowbreak x_{2}x_{5},\allowbreak x_{3}x_{4}x_{5}\bigr)\) \\
157 & \(\bigl(x_{1}x_{2},\allowbreak x_{1}x_{3},\allowbreak x_{2}x_{3},\allowbreak x_{1}x_{4},\allowbreak x_{2}x_{4}x_{5},\allowbreak x_{3}x_{4}x_{5}\bigr)\) \\
158 & \(\bigl(x_{1}x_{2},\allowbreak x_{1}x_{3},\allowbreak x_{2}x_{3},\allowbreak x_{1}x_{4}x_{5},\allowbreak x_{2}x_{4}x_{5},\allowbreak x_{3}x_{4}x_{5}\bigr)\) \\
159 & \(\bigl(x_{1}x_{2},\allowbreak x_{1}x_{3},\allowbreak x_{1}x_{4},\allowbreak x_{2}x_{3}x_{4},\allowbreak x_{1}x_{5},\allowbreak x_{2}x_{3}x_{5}\bigr)\) \\
160 & \(\bigl(x_{1}x_{2},\allowbreak x_{1}x_{3},\allowbreak x_{1}x_{4},\allowbreak x_{2}x_{3}x_{4},\allowbreak x_{2}x_{5},\allowbreak x_{3}x_{5}\bigr)\) \\
161 & \(\bigl(x_{1}x_{2},\allowbreak x_{1}x_{3},\allowbreak x_{1}x_{4},\allowbreak x_{2}x_{3}x_{4},\allowbreak x_{2}x_{5},\allowbreak x_{3}x_{4}x_{5}\bigr)\) \\
162 & \(\bigl(x_{1}x_{2},\allowbreak x_{1}x_{3},\allowbreak x_{1}x_{4},\allowbreak x_{2}x_{3}x_{4},\allowbreak x_{2}x_{3}x_{5},\allowbreak x_{2}x_{4}x_{5}\bigr)\) \\
163 & \(\bigl(x_{1}x_{2},\allowbreak x_{1}x_{3},\allowbreak x_{1}x_{4},\allowbreak x_{2}x_{5},\allowbreak x_{3}x_{5},\allowbreak x_{4}x_{5}\bigr)\) \\
164 & \(\bigl(x_{1}x_{2},\allowbreak x_{1}x_{3},\allowbreak x_{1}x_{4},\allowbreak x_{2}x_{3}x_{5},\allowbreak x_{2}x_{4}x_{5},\allowbreak x_{3}x_{4}x_{5}\bigr)\) \\
165 & \(\bigl(x_{1}x_{2},\allowbreak x_{1}x_{3},\allowbreak x_{2}x_{4},\allowbreak x_{3}x_{4},\allowbreak x_{2}x_{3}x_{5},\allowbreak x_{1}x_{4}x_{5}\bigr)\) \\
166 & \(\bigl(x_{1}x_{2},\allowbreak x_{1}x_{3},\allowbreak x_{2}x_{4},\allowbreak x_{2}x_{3}x_{5},\allowbreak x_{1}x_{4}x_{5},\allowbreak x_{3}x_{4}x_{5}\bigr)\) \\
167 & \(\bigl(x_{1}x_{2},\allowbreak x_{1}x_{3},\allowbreak x_{2}x_{3}x_{4},\allowbreak x_{2}x_{3}x_{5},\allowbreak x_{1}x_{4}x_{5},\allowbreak x_{2}x_{4}x_{5}\bigr)\) \\
168 & \(\bigl(x_{1}x_{2},\allowbreak x_{1}x_{3},\allowbreak x_{2}x_{3}x_{4},\allowbreak x_{2}x_{3}x_{5},\allowbreak x_{2}x_{4}x_{5},\allowbreak x_{3}x_{4}x_{5}\bigr)\) \\
169 & \(\bigl(x_{1}x_{2},\allowbreak x_{1}x_{3},\allowbreak x_{2}x_{3}x_{4},\allowbreak x_{1}x_{4}x_{5},\allowbreak x_{2}x_{4}x_{5},\allowbreak x_{3}x_{4}x_{5}\bigr)\) \\
170 & \(\bigl(x_{1}x_{2},\allowbreak x_{3}x_{4},\allowbreak x_{1}x_{3}x_{5},\allowbreak x_{2}x_{3}x_{5},\allowbreak x_{1}x_{4}x_{5},\allowbreak x_{2}x_{4}x_{5}\bigr)\) \\
171 & \(\bigl(x_{1}x_{2},\allowbreak x_{1}x_{3}x_{4},\allowbreak x_{2}x_{3}x_{4},\allowbreak x_{1}x_{3}x_{5},\allowbreak x_{2}x_{3}x_{5},\allowbreak x_{1}x_{4}x_{5}\bigr)\) \\
172 & \(\bigl(x_{1}x_{2},\allowbreak x_{1}x_{3}x_{4},\allowbreak x_{2}x_{3}x_{4},\allowbreak x_{1}x_{3}x_{5},\allowbreak x_{2}x_{3}x_{5},\allowbreak x_{3}x_{4}x_{5}\bigr)\) \\
173 & \(\bigl(x_{1}x_{2},\allowbreak x_{1}x_{3}x_{4},\allowbreak x_{2}x_{3}x_{4},\allowbreak x_{1}x_{3}x_{5},\allowbreak x_{1}x_{4}x_{5},\allowbreak x_{3}x_{4}x_{5}\bigr)\) \\
174 & \(\bigl(x_{1}x_{2},\allowbreak x_{1}x_{3}x_{4},\allowbreak x_{2}x_{3}x_{4},\allowbreak x_{1}x_{3}x_{5},\allowbreak x_{2}x_{4}x_{5},\allowbreak x_{3}x_{4}x_{5}\bigr)\) \\
175 & \(\bigl(x_{1}x_{2}x_{3},\allowbreak x_{1}x_{2}x_{4},\allowbreak x_{1}x_{3}x_{4},\allowbreak x_{2}x_{3}x_{4},\allowbreak x_{1}x_{2}x_{5},\allowbreak x_{1}x_{3}x_{5}\bigr)\) \\
176 & \(\bigl(x_{1}x_{2}x_{3},\allowbreak x_{1}x_{2}x_{4},\allowbreak x_{1}x_{3}x_{4},\allowbreak x_{2}x_{3}x_{4},\allowbreak x_{1}x_{2}x_{5},\allowbreak x_{3}x_{4}x_{5}\bigr)\) \\
177 & \(\bigl(x_{1}x_{2}x_{3},\allowbreak x_{1}x_{2}x_{4},\allowbreak x_{1}x_{3}x_{4},\allowbreak x_{1}x_{2}x_{5},\allowbreak x_{1}x_{3}x_{5},\allowbreak x_{1}x_{4}x_{5}\bigr)\) \\
178 & \(\bigl(x_{1}x_{2}x_{3},\allowbreak x_{1}x_{2}x_{4},\allowbreak x_{1}x_{3}x_{4},\allowbreak x_{1}x_{2}x_{5},\allowbreak x_{1}x_{3}x_{5},\allowbreak x_{2}x_{4}x_{5}\bigr)\) \\
179 & \(\bigl(x_{1}x_{2}x_{3},\allowbreak x_{1}x_{2}x_{4},\allowbreak x_{1}x_{3}x_{4},\allowbreak x_{1}x_{2}x_{5},\allowbreak x_{1}x_{3}x_{5},\allowbreak x_{2}x_{3}x_{4}x_{5}\bigr)\) \\
180 & \(\bigl(x_{1}x_{2}x_{3},\allowbreak x_{1}x_{2}x_{4},\allowbreak x_{1}x_{3}x_{4},\allowbreak x_{1}x_{2}x_{5},\allowbreak x_{2}x_{3}x_{5},\allowbreak x_{3}x_{4}x_{5}\bigr)\) \\
181 & \(\bigl(x_{1}x_{2}x_{3},\allowbreak x_{1}x_{2}x_{4},\allowbreak x_{1}x_{3}x_{4},\allowbreak x_{2}x_{3}x_{5},\allowbreak x_{2}x_{4}x_{5},\allowbreak x_{3}x_{4}x_{5}\bigr)\) \\
182 & \(\bigl(x_{1},\allowbreak x_{2}x_{3},\allowbreak x_{2}x_{4},\allowbreak x_{3}x_{4},\allowbreak x_{2}x_{5},\allowbreak x_{3}x_{5},\allowbreak x_{4}x_{5}\bigr)\) \\
183 & \(\bigl(x_{1}x_{2},\allowbreak x_{1}x_{3},\allowbreak x_{2}x_{3},\allowbreak x_{1}x_{4},\allowbreak x_{2}x_{4},\allowbreak x_{3}x_{4},\allowbreak x_{1}x_{5}\bigr)\) \\
184 & \(\bigl(x_{1}x_{2},\allowbreak x_{1}x_{3},\allowbreak x_{2}x_{3},\allowbreak x_{1}x_{4},\allowbreak x_{2}x_{4},\allowbreak x_{1}x_{5},\allowbreak x_{2}x_{5}\bigr)\) \\
185 & \(\bigl(x_{1}x_{2},\allowbreak x_{1}x_{3},\allowbreak x_{2}x_{3},\allowbreak x_{1}x_{4},\allowbreak x_{2}x_{4},\allowbreak x_{1}x_{5},\allowbreak x_{3}x_{5}\bigr)\) \\
186 & \(\bigl(x_{1}x_{2},\allowbreak x_{1}x_{3},\allowbreak x_{2}x_{3},\allowbreak x_{1}x_{4},\allowbreak x_{2}x_{4},\allowbreak x_{1}x_{5},\allowbreak x_{3}x_{4}x_{5}\bigr)\) \\
187 & \(\bigl(x_{1}x_{2},\allowbreak x_{1}x_{3},\allowbreak x_{2}x_{3},\allowbreak x_{1}x_{4},\allowbreak x_{2}x_{4},\allowbreak x_{3}x_{5},\allowbreak x_{4}x_{5}\bigr)\) \\
188 & \(\bigl(x_{1}x_{2},\allowbreak x_{1}x_{3},\allowbreak x_{2}x_{3},\allowbreak x_{1}x_{4},\allowbreak x_{1}x_{5},\allowbreak x_{2}x_{4}x_{5},\allowbreak x_{3}x_{4}x_{5}\bigr)\) \\
189 & \(\bigl(x_{1}x_{2},\allowbreak x_{1}x_{3},\allowbreak x_{1}x_{4},\allowbreak x_{2}x_{3}x_{4},\allowbreak x_{1}x_{5},\allowbreak x_{2}x_{3}x_{5},\allowbreak x_{2}x_{4}x_{5}\bigr)\) \\
190 & \(\bigl(x_{1}x_{2},\allowbreak x_{1}x_{3},\allowbreak x_{1}x_{4},\allowbreak x_{2}x_{3}x_{4},\allowbreak x_{2}x_{5},\allowbreak x_{3}x_{5},\allowbreak x_{4}x_{5}\bigr)\) \\
191 & \(\bigl(x_{1}x_{2},\allowbreak x_{1}x_{3},\allowbreak x_{1}x_{4},\allowbreak x_{2}x_{3}x_{4},\allowbreak x_{2}x_{3}x_{5},\allowbreak x_{2}x_{4}x_{5},\allowbreak x_{3}x_{4}x_{5}\bigr)\) \\
192 & \(\bigl(x_{1}x_{2},\allowbreak x_{1}x_{3},\allowbreak x_{2}x_{3}x_{4},\allowbreak x_{2}x_{3}x_{5},\allowbreak x_{1}x_{4}x_{5},\allowbreak x_{2}x_{4}x_{5},\allowbreak x_{3}x_{4}x_{5}\bigr)\) \\
193 & \(\bigl(x_{1}x_{2},\allowbreak x_{1}x_{3}x_{4},\allowbreak x_{2}x_{3}x_{4},\allowbreak x_{1}x_{3}x_{5},\allowbreak x_{2}x_{3}x_{5},\allowbreak x_{1}x_{4}x_{5},\allowbreak x_{2}x_{4}x_{5}\bigr)\) \\
194 & \(\bigl(x_{1}x_{2},\allowbreak x_{1}x_{3}x_{4},\allowbreak x_{2}x_{3}x_{4},\allowbreak x_{1}x_{3}x_{5},\allowbreak x_{2}x_{3}x_{5},\allowbreak x_{1}x_{4}x_{5},\allowbreak x_{3}x_{4}x_{5}\bigr)\) \\
195 & \(\bigl(x_{1}x_{2}x_{3},\allowbreak x_{1}x_{2}x_{4},\allowbreak x_{1}x_{3}x_{4},\allowbreak x_{2}x_{3}x_{4},\allowbreak x_{1}x_{2}x_{5},\allowbreak x_{1}x_{3}x_{5},\allowbreak x_{2}x_{3}x_{5}\bigr)\) \\
196 & \(\bigl(x_{1}x_{2}x_{3},\allowbreak x_{1}x_{2}x_{4},\allowbreak x_{1}x_{3}x_{4},\allowbreak x_{2}x_{3}x_{4},\allowbreak x_{1}x_{2}x_{5},\allowbreak x_{1}x_{3}x_{5},\allowbreak x_{1}x_{4}x_{5}\bigr)\) \\
197 & \(\bigl(x_{1}x_{2}x_{3},\allowbreak x_{1}x_{2}x_{4},\allowbreak x_{1}x_{3}x_{4},\allowbreak x_{2}x_{3}x_{4},\allowbreak x_{1}x_{2}x_{5},\allowbreak x_{1}x_{3}x_{5},\allowbreak x_{2}x_{4}x_{5}\bigr)\) \\
198 & \(\bigl(x_{1}x_{2}x_{3},\allowbreak x_{1}x_{2}x_{4},\allowbreak x_{1}x_{3}x_{4},\allowbreak x_{1}x_{2}x_{5},\allowbreak x_{1}x_{3}x_{5},\allowbreak x_{1}x_{4}x_{5},\allowbreak x_{2}x_{3}x_{4}x_{5}\bigr)\) \\
199 & \(\bigl(x_{1}x_{2}x_{3},\allowbreak x_{1}x_{2}x_{4},\allowbreak x_{1}x_{3}x_{4},\allowbreak x_{1}x_{2}x_{5},\allowbreak x_{1}x_{3}x_{5},\allowbreak x_{2}x_{4}x_{5},\allowbreak x_{3}x_{4}x_{5}\bigr)\) \\
200 & \(\bigl(x_{1}x_{2},\allowbreak x_{1}x_{3},\allowbreak x_{2}x_{3},\allowbreak x_{1}x_{4},\allowbreak x_{2}x_{4},\allowbreak x_{3}x_{4},\allowbreak x_{1}x_{5},\allowbreak x_{2}x_{5}\bigr)\) \\
201 & \(\bigl(x_{1}x_{2},\allowbreak x_{1}x_{3},\allowbreak x_{2}x_{3},\allowbreak x_{1}x_{4},\allowbreak x_{2}x_{4},\allowbreak x_{1}x_{5},\allowbreak x_{2}x_{5},\allowbreak x_{3}x_{4}x_{5}\bigr)\) \\
202 & \(\bigl(x_{1}x_{2},\allowbreak x_{1}x_{3},\allowbreak x_{2}x_{3},\allowbreak x_{1}x_{4},\allowbreak x_{2}x_{4},\allowbreak x_{1}x_{5},\allowbreak x_{3}x_{5},\allowbreak x_{4}x_{5}\bigr)\) \\
203 & \(\bigl(x_{1}x_{2},\allowbreak x_{1}x_{3},\allowbreak x_{1}x_{4},\allowbreak x_{2}x_{3}x_{4},\allowbreak x_{1}x_{5},\allowbreak x_{2}x_{3}x_{5},\allowbreak x_{2}x_{4}x_{5},\allowbreak x_{3}x_{4}x_{5}\bigr)\) \\
204 & \(\bigl(x_{1}x_{2},\allowbreak x_{1}x_{3}x_{4},\allowbreak x_{2}x_{3}x_{4},\allowbreak x_{1}x_{3}x_{5},\allowbreak x_{2}x_{3}x_{5},\allowbreak x_{1}x_{4}x_{5},\allowbreak x_{2}x_{4}x_{5},\allowbreak x_{3}x_{4}x_{5}\bigr)\) \\
205 & \(\bigl(x_{1}x_{2}x_{3},\allowbreak x_{1}x_{2}x_{4},\allowbreak x_{1}x_{3}x_{4},\allowbreak x_{2}x_{3}x_{4},\allowbreak x_{1}x_{2}x_{5},\allowbreak x_{1}x_{3}x_{5},\allowbreak x_{2}x_{3}x_{5},\allowbreak x_{1}x_{4}x_{5}\bigr)\) \\
206 & \(\bigl(x_{1}x_{2}x_{3},\allowbreak x_{1}x_{2}x_{4},\allowbreak x_{1}x_{3}x_{4},\allowbreak x_{2}x_{3}x_{4},\allowbreak x_{1}x_{2}x_{5},\allowbreak x_{1}x_{3}x_{5},\allowbreak x_{2}x_{4}x_{5},\allowbreak x_{3}x_{4}x_{5}\bigr)\) \\
207 & \(\bigl(x_{1}x_{2},\allowbreak x_{1}x_{3},\allowbreak x_{2}x_{3},\allowbreak x_{1}x_{4},\allowbreak x_{2}x_{4},\allowbreak x_{3}x_{4},\allowbreak x_{1}x_{5},\allowbreak x_{2}x_{5},\allowbreak x_{3}x_{5}\bigr)\) \\
208 & \(\bigl(x_{1}x_{2}x_{3},\allowbreak x_{1}x_{2}x_{4},\allowbreak x_{1}x_{3}x_{4},\allowbreak x_{2}x_{3}x_{4},\allowbreak x_{1}x_{2}x_{5},\allowbreak x_{1}x_{3}x_{5},\allowbreak x_{2}x_{3}x_{5},\allowbreak x_{1}x_{4}x_{5},\allowbreak x_{2}x_{4}x_{5}\bigr)\) \\
209 & \(\bigl(x_{1}x_{2},\allowbreak x_{1}x_{3},\allowbreak x_{2}x_{3},\allowbreak x_{1}x_{4},\allowbreak x_{2}x_{4},\allowbreak x_{3}x_{4},\allowbreak x_{1}x_{5},\allowbreak x_{2}x_{5},\allowbreak x_{3}x_{5},\allowbreak x_{4}x_{5}\bigr)\) \\
210 & \(\bigl(x_{1}x_{2}x_{3},\allowbreak x_{1}x_{2}x_{4},\allowbreak x_{1}x_{3}x_{4},\allowbreak x_{2}x_{3}x_{4},\allowbreak x_{1}x_{2}x_{5},\allowbreak x_{1}x_{3}x_{5},\allowbreak x_{2}x_{3}x_{5},\allowbreak x_{1}x_{4}x_{5},\allowbreak x_{2}x_{4}x_{5},\allowbreak x_{3}x_{4}x_{5}\bigr)\) \\
\end{longtable}
}

\bibliographystyle{amsalpha}
\bibliography{references}

\end{document}